\theoremstyle{plain}
\newtheorem{theorem}{Theorem}[section]
\newtheorem{proposition}[theorem]{Proposition}
\newtheorem{lemma}[theorem]{Lemma}
\newtheorem{corollary}[theorem]{Corollary}
\newtheorem{conjecture}[theorem]{Conjecture}
\theoremstyle{definition}
\newcommand{\appsection}[1]{\let\oldthesection\thesection
\renewcommand{\thesection}{Appendix \oldthesection}
\section{#1}\let\thesection\oldthesection}
\newtheorem{definition}[theorem]{Definition}
\theoremstyle{remark}
\newtheorem{remark}[theorem]{Remark}
\newtheorem{example}[theorem]{Example}
\def\R{{\mathbb{R}}}
\def\Z{{\mathbb{Z}}}
\def\F{{\mathbb{F}}}
\def\Q{{\mathbb{Q}}}
\def\C{{\mathbb{C}}}
\def\K{{\mathit{k}}}
\def\P{{\mathbb{P}}}
\def\AR{{\mathcal{A}}}
\begin{document}

\title{On the geography of line arrangements}
\author[Sebastian Eterovi\'c]{Sebastian Eterovi\'c}
\email{eterovic@maths.ox.ac.uk}
\address{Mathematical Institute, University of Oxford, Andrew Wiles Building, Radcliffe Observatory Quarter, Woodstock Road, Oxford, UK.}
\author[Fernando Figueroa]{Fernando Figueroa}
\email{fefigueroa@uc.cl}
\address{Facultad de Matem\'aticas, Pontificia Universidad Cat\'olica de Chile, Campus San Joaqu\'in, Avenida Vicu\~na Mackenna 4860, Santiago, Chile.}
\author[Giancarlo Urz\'ua]{Giancarlo Urz\'ua}
\email{urzua@mat.uc.cl}
\address{Facultad de Matem\'aticas, Pontificia Universidad Cat\'olica de Chile, Campus San Joaqu\'in, Avenida Vicu\~na Mackenna 4860, Santiago, Chile.}
%
%
%
\date{\today}

\begin{abstract}
This is a short note on various results about the combinatorial properties of line arrangements in terms of the Chern numbers of the corresponding log surfaces. This resembles the study of the geography of surfaces of general type. We prove some new results about the distribution of Chern slopes, we prove a connection between their accumulation points and the accumulation points of linear $H$-constants on the plane, and we present two open problems in relation to geography over $\Q$ and over $\C$. 

\end{abstract}

\maketitle

\section{Introduction} \label{intro}

Let $\K$ be an arbitrary field. The projective plane over $\K$ will be denoted by $\P^2_{\K}$. Our motivating question is whether one can describe the behaviour of the Chern numbers of line arrangements on $\P^2_{\K}$ (Chern numbers will be introduced in \textsection\ref{s2}). The Chern numbers of line arrangements were first introduced by Hirzebruch in \cite[\textsection 3.3]{Hirz83} to study the Chern numbers of algebraic surfaces of general type. However, one can circumvent this and define the Chern numbers of line arrangements purely in terms of the incidence structure they define (this is the approach taken here). As such, Chern numbers are susceptible to certain combinatorial properties of line arrangements. By recalling some of the most important results about these numbers and also presenting some new results (in particular, we will show a connection with the so-called linear $H$-constants in \textsection \ref{s3}), we aim to show that the study of Chern numbers of line arrangements proves to be insightful both from a combinatorial and from a geometric perspective, and leads to very interesting questions which we will present at the end.

The most relevant fields for us will be $\Q$, $\R$, $\C$, and the algebraic closure $\overline{\F}_p$ of the field of $p$ elements $\F_p$.


\section{Definitions, examples, and combinatorial facts} \label{s1}

\begin{definition}
A set of the form $\{[x,y,z] \in \P^2_{\K} \colon ax+by+cz=0\}$ for some $a,b,c \in \K$ not all zero, will be called a \textit{line}. A \textit{line arrangement} is a finite collection of two or more lines.
\label{lines} 
\end{definition}

\begin{definition}
The \textit{incidence structure} of a line arrangement $\AR$ is the data $(\mathcal{P},\mathcal{L}, I)$, where $\mathcal{L}$ is the set of lines of $\AR$, $\mathcal{P}$ is the set of points belonging to at least two lines in $\mathcal{L}$, and $I\subseteq \mathcal{P}\times \mathcal{L}$ is the incidence relation saying which points belong to which lines. For $m\geq 2$, an \textit{m-point} of $\AR$ is a point in $\mathcal{P}$ which belongs to exactly $m$ lines in $\mathcal{L}$. We denote the number of $m$-points by $t_m$. A line arrangement is said to be in \textit{general position} if the arrangement satisfies $t_m=0$ for every $m>2$ (so it only has double points).
\label{comb}
\end{definition}

Our main reference on line arrangements is \cite{Hirz83}. 

\begin{example}
An arrangement of $d$ lines with $t_d=1$ is called \textit{trivial}. It consists of $d$ concurrent lines, and so $t_{k}=0$ for $2\leq k < d$ (this arrangement is also sometimes called a \textit{pencil of lines}. An arrangement is called \textit{quasi-trivial} (or a \textit{near-pencil}) if $t_{d-1}=1$. This consists of $d-1$ lines meeting at the same point $p$, and another line not going through $p$. In this case $t_{2} = d-1$, $t_{d-1} = 1$ and $t_{k}=0$ otherwise. Given that we understand these two types of arrangements, we will not consider them once we introduce Chern numbers.
\label{trivial}
\end{example}

\begin{example}[Real line arrangements (see {{\cite[1.1]{Hirz83}}})]
Line arrangements in the real projective plane partition $\P^2_{\R}$ into polygons. If all polygons are triangles, then the arrangement is called \textit{simplicial}. There is a vast literature on simplicial arrangements (cf. \cite{Gr05}). They have not been classified yet. An example is the \textit{complete quadrilateral} defined by the zeros of $xyz(x-y)(x-z)(y-z)$. It is an arrangement of $6$ lines with $t_2=3$, $t_3=4$, $t_m=0$ else. Regular polygons define families of simplicial arrangements: by taking the regular polygon of $n$ lines and adding its $n$ lines of symmetry, we get an arrangement of $2n$ lines. It has $t_2 = n$, $t_3 = n(n-1)/2$, $t_n = 1$, $t_m=0$ else.
\label{simplicial}
\end{example}

\begin{example}
Let $n\geq 4$. Then the zeros of $(x^n-y^n)(x^n-z^n)(y^n-z^n)$ in $\P^2_{\C}$ define an arrangement of $3n$ lines with $t_3=n^2$, $t_n=3$, $t_m=0$ else. They are called \textit{Ceva arrangements}. For $n=3$, the polynomial $(x^3-y^3)(x^3-z^3)(y^3-z^3)$ defines what is known as the \textit{dual Hesse arrangement} which has $9$ lines and $12$ triple points, that is to say $d=9$, $t_3=12$, and $t_m=0$ otherwise. The \textit{Hesse arrangement} is the arrangement of $12$ lines joining the $9$ inflection points of a given smooth projective cubic in $\P^2_{\C}$. It turns out that they are all projectively equivalent, and they have $t_2=12$, $t_4=9$, $t_m=0$ else. The dual lines defined by the nine 4-points are the $9$ lines of the dual Hesse arrangement. We recall that points and lines are dual objects of each other, in the sense that an arrangement of lines corresponds to the collection of points in $\P^2_{\K}$ given by the $3$ coefficients of each line, and vice-versa.
\label{Ceva}
\end{example}

\begin{example}
Let $\K=\F_{p^n}$ for some prime $p$ and $n>0$. The set of $p^{2n}+p^n+1$ lines in $\P^2_{\K}$ form an arrangement of lines with $t_{p^n+1}=p^{2n}+p^n+1$, $t_m=0$ else. We call it a \textit{finite projective plane arrangement}. For $p=2$ and $n=1$ we have the \textit{Fano arrangement} of seven lines with seven triple points. 
\label{finite}
\end{example}

By counting pairs of lines in two different ways, we obtain that any arrangement of $d$ lines satisfies $$ {d \choose 2} = \sum_{m \geq 2} {m \choose 2} t_m,$$ which is a purely combinatorial fact. Another general statement is the following theorem, originally proven in \cite{deBrEr48}, and of which there exist many purely combinatorial proofs (see e.g. \cite[Theorem 14.1.13]{IoSh06} and \cite[\textsection 12.2]{Ju11}; the latter presents an argument due to Conway). We present another proof which, although not as general as the ones we have cited, takes advantage of the field structure underlying $\P_{\K}^{2}$ to explicitly recover the finite field over which the arrangement exists, and shows how to interpret the field operations in terms of intersections of lines.

\begin{theorem}
A nontrivial arrangement of $d$ lines $\AR$ satisfies $$\sum_{m \geq 2} t_m \geq d.$$ Equality holds if and only if $\AR$ is either quasi-trivial or a finite projective plane arrangement.
\label{erdos}
\end{theorem}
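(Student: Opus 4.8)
The plan is to compare the number of lines $d$ with the number of points $p:=\sum_{m\ge 2}t_m=|\mathcal{P}|$ through the incidence geometry, and then to identify the extremal configurations. Write $r_P$ for the number of lines of $\AR$ through a point $P\in\mathcal P$ (so $P$ is an $r_P$-point) and $k_L$ for the number of points of $\mathcal P$ on a line $L\in\mathcal L$. The recurring geometric input is the \emph{local inequality}: if $P\notin L$, the $r_P$ lines through $P$ meet $L$ in $r_P$ distinct points, each lying on two lines and hence in $\mathcal P$, so $r_P\le k_L$. Nontriviality feeds in here as well: no point lies on all $d$ lines, so $r_P\le d-1$; and if some line had $k_L=1$ then all other lines would pass through the single point of $L\cap\mathcal P$, forcing the whole arrangement to be concurrent, so in fact $k_L\ge 2$ and, by the same concurrency argument, $k_L\le p-1$ for every $L$.

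For the inequality $\sum_{m\ge2}t_m\ge d$ I would run a rank argument of Fisher type, whose virtue is that it ignores the ground field $\K$. Let $B\in\{0,1\}^{d\times p}$ be the line--point incidence matrix, viewed over $\R$. Because two distinct lines of $\P^2_\K$ meet in exactly one point of $\mathcal P$, one computes $BB^{T}=\mathrm{diag}(k_L-1)+J_d$, where $J_d$ is the all-ones matrix. For $x\in\R^d$ we have $x^{T}BB^{T}x=\sum_L(k_L-1)x_L^2+(\sum_L x_L)^2$, which is strictly positive for $x\ne0$ since each $k_L-1\ge1$. Hence $BB^{T}$ is positive definite, so $\mathrm{rank}(B)\ge\mathrm{rank}(BB^{T})=d$; as also $\mathrm{rank}(B)\le p$, we conclude $p\ge d$.

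For the equality case I would first extract combinatorial rigidity. Assume $p=d$. Counting non-incident pairs two ways gives $p=\sum_{P\notin L}\frac{1}{d-r_P}$ and $d=\sum_{P\notin L}\frac{1}{p-k_L}$ (each $P$ avoids $d-r_P$ lines, each $L$ avoids $p-k_L$ points, and the denominators are positive by the first paragraph). Since $p=d$ and $r_P\le k_L$ makes every summand satisfy $\frac{1}{d-r_P}\le\frac{1}{d-k_L}$, equality of the two totals forces $r_P=k_L$ for every non-incident pair. Now either there exist two points meeting every line, in which case a direct check identifies $\AR$ as quasi-trivial, or every two points are jointly missed by some line; in the latter case $r_P=k_L=r_{P'}$ for any such triple, so all $r_P$ share a common value $q+1$, and since every line is missed by some point, all $k_L$ equal $q+1$ too. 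Fixing a point $A$, the $q+1$ lines through it carry $q$ further points each, all distinct, accounting for $q(q+1)+1=q^2+q+1=p$ points; hence every pair of points lies on a unique line of $\AR$, and $(\mathcal P,\mathcal L)$ is a projective plane of order $q\ge2$ with $d=p=q^2+q+1$ and $t_{q+1}=p$.

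The heart of the matter, and what the statement genuinely advertises, is to promote this abstract projective plane to an actual finite projective plane arrangement by recovering the field. Since the plane lives inside $\P^2_\K$, Desargues' theorem holds in it, so it is Desarguesian and may be coordinatized by a division ring whose addition and multiplication are read off from intersections of the lines (a von Staudt--style algebra of throws, which is exactly the interpretation of the field operations promised in the introduction to the proof). Finiteness of $\mathcal P$ forces this division ring to be finite, hence a field $\F_q$ by Wedderburn's theorem, and the arrangement is then precisely the set of all lines of a $\P^2_{\F_q}\hookrightarrow\P^2_\K$. The converse is immediate from the examples: Example \ref{trivial} gives $\sum t_m=d$ for a quasi-trivial arrangement, and Example \ref{finite} gives $\sum t_m=t_{q+1}=q^2+q+1=d$ for a finite projective plane. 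I expect the genuine obstacle to be this last coordinatization step --- producing the field explicitly from line intersections and checking it embeds compatibly into $\K$ --- rather than the inequality, which the linear-algebra argument settles cleanly.
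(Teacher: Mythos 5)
Your proof is correct in outline and reaches the same conclusion, but it diverges from the paper's argument at two of its three stages. For the inequality, both arguments are Fisher-type rank bounds over $\Q$: the paper shows directly that the incidence vectors of the $d$ lines are linearly independent (a dependence would force every coefficient to be negative, impossible for $0$--$1$ vectors), while you establish positive definiteness of $BB^{T}$ using $k_L\ge 2$ --- essentially the same mechanism in a different guise. For the combinatorial rigidity in the equality case, the paper follows Houck--Paul: a determinant computation produces a permutation pairing each line with a point not on it, whence $n_k\ge g_k$, and double counting forces equality and the property that every two points are collinear; you instead use the Conway-style weighting comparing $\sum 1/(d-r_P)$ with $\sum 1/(p-k_L)$ over non-incident pairs, which yields $r_P=k_L$ for all such pairs more directly. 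Both routes are standard and correct, and both treat the quasi-trivial alternative with the same (slightly terse) dichotomy. The genuine divergence is the field-recovery step, which is the paper's advertised contribution: the paper normalizes a quadrilateral, defines $\K'=\{x\in\K : [1,x,0] \text{ is a } (q+1)\text{-point}\}$, and verifies closure under $c\mapsto -c$, $c\mapsto c+1$, products, squares and inverses by explicit line intersections, exhibiting $\K'$ as a copy of $\F_q$ inside $\K$. You instead invoke the classical chain: a meet/join-closed finite subplane of the Desarguesian plane $\P^2_{\K}$ is Desarguesian, hence coordinatized by a division ring, which is finite and therefore a field by Wedderburn. This is valid, but as stated it only pins down the abstract isomorphism type of the incidence structure; to conclude that $\AR$ is literally the line set of a $\P^2_{\F_q}$ embedded in $\P^2_{\K}$ (in particular that $\F_q$ embeds in $\K$, which is what rules out such arrangements in characteristic $0$) one must check that the von Staudt operations computed inside the subplane agree with the field operations of $\K$ on a coordinate line --- exactly the explicit computation the paper performs, and which you correctly flag as the crux but leave to the cited theory. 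In short, your route trades the paper's hands-on construction for appeals to coordinatization theory (and Wedderburn becomes unnecessary once the coordinatizing ring is realized inside the commutative field $\K$); what the paper's explicit approach buys is self-containedness and the concrete interpretation of the field operations via intersections, which was its stated purpose.
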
 

\begin{proof}
This first part is taken from \cite[Remark 7.4]{Urz11}. Let us label the $m$-points of the arrangement from $1$ to $r=\sum_{m \geq 2} t_m $, and the lines from $1$ to $d$. We define $$a_{i,j}= \left\{ \begin{array}{lcc}
             1 &   \text{if the line } j \text{ contains point }i \\
             \\ 0  &  \text{otherwise.} 
             \end{array}
   \right.$$ Let $L_j$ be the vector $(a_{i,j})_{1 \leq i \leq r}$. We want to prove that the $L_j$ are linearly independent in $\Q^r$. Suppose not, say that $L_1=\sum_{j=2}^d x_j L_j$ for some $x_j \in \Q$. Then, by taking the usual inner product in $\Q^r$, we have $$x_j = \frac{L_1 \cdot L_1-1}{1-L_j \cdot L_j} <0 $$ for all $j>1$. But the coordinates of $L_1$ are either 1 or 0, and so it is impossible that every $x_{j}$ is negative. This proves the inequality part of the statement.

For the second part of the statement, observe first that a quasi-trivial arrangement and a finite projective plane arrangement have $r=d$. Conversely, assume that $\AR$ satisfies $r=d$ and that it is not quasi-trivial. Define the matrix $A=(L_j)_{1\leq j \leq d}$, and let $n_j$ be the number of points in the line $L_j$, and let $g_i$ be the number of lines passing through the $i$-th point. We now use \cite[Section 1]{HP79}.

We first prove the following lemma:

\begin{lemma}
There is a permutation matrix $P$ such that $(PA)_{ii}=0$ for every $1\leq i \leq d$.
\end{lemma}
\begin{proof}

Define the matrix $T$; $T_{ij}=1,1 \leq i \leq d, 1 \leq j \leq d$, so \begin{align*}
A^t(T-A)=A^tT-A^tA= &
\begin{bmatrix}
 0 & n_1 -1 & n_1-1  & \ldots & n_1-1\\
 n_2 -1 & 0 & n_2-1 & \ldots  & n_2-1\\
 \vdots & \vdots & \vdots & \ddots & \vdots\\
 n_d-1 & n_d-1 & n_d-1 & \dots & 0 
\end{bmatrix}.
\end{align*}

Thus we have $$det(A^t(T-A))=\prod_{j=1}^d(n_j-1)det\begin{bmatrix}
 0 & 1 & 1  & \ldots & 1\\
 1 & 0 & 1 & \ldots  & 1\\
 \vdots & \vdots & \vdots & \ddots & \vdots\\
 1 & 1 & 1 & \dots & 0 
\end{bmatrix} \neq 0.$$ Then $T-A$ is a matrix with entries in $\{0,1\}$ and non zero determinant, hence there is an addend in the determinant formula which is non zero, meaning in our case that it is the multiplication of only ones. Therefore there must be a row permutation $P$ such that $P(T-A)$ has only $1$ in its diagonal, and so $PA$ has only $0$ on its diagonal.
\end{proof}

Hence by reordering the $m$-points, we can assume that $A$ has $a_{kk}=0$, that is, the line $L_k$ does not contain the $k$-th point. In this way we have that $n_k \geq g_k$ for every $k$. But then $\sum_j n_j=\sum_i\sum_j a_{ij}=\sum_i g_i$ implies $n_k=g_k$ for every $i$. This in turn implies that there is a line passing through any pair of points. Indeed this gives the second equality in the count 
\begin{align*}
& \# \{(h,k):\text{there is a line containing point h and point k} \} \\
& =\sum_{i=1}^d {n_i \choose 2}=\sum_{i=1}^d {g_i \choose 2}=\sum_{m \geq 2} {m \choose 2} t_m={d \choose 2}.
\end{align*}

Finally, unless $\AR$ is a quasi-trivial arrangement, there are $4$ points in the arrangement, with no $3$ in a line, then we get that for every pair of points there is a line not passing through either of them. Let $P_i$ and $P_j$ be two points, and let $L_k$ be a line not containing them. Then $g_i=n_k$ and $g_j=n_k$. Therefore $g_1=\ldots=g_d$, and so $n_j=g_i$ for every $i \neq j$. 

Assume now that $n_i=g_j=q+1$ for all $i,j$, for some $q$. So we have that $d=q^2+q+1=t_{q+1}$, $t_m=0$ else. We recall that our line arrangement $\AR$ is in $\P_{\K}^2$ for some field $\K$. 

By a change of coordinates, we can assume that four of the $(q+1)$-points of $\AR$ are $[1,0,1]$, $[0,1,1]$,$[1,0,0]$, and $[0,1,0]$. Hence we also have the $(q+1)$-point $[0,0,1]$, and the lines $x=0$, $z=0$, $x=z$, and $y=z$.

Let $\K'= \{x \in \K \ \text{such that} \ [1,x,0] \ \text{is a} \ (q+1)\text{-point} \}$. We will show that this set is a subfield of $\K$. Clearly $0,1\in \K'$. Since we know it has exactly $q$ elements, we only need to show that $\K'$ is a ring. From now on, we will use the notation $[a,b,c]-[c,d,e]$ for the line through the points $[a,b,c]$ and $[c,d,e]$. 

We have $[1,c,0]-[1,0,1] \cap \{x=0\}=[0,-c,1]$, $[0,-c,1]-[1,0,0] \cap \{x=z\}=[1,-c,1]$, and $[1,-c,1]-[0,0,1] \cap \{z=0\} = [1,-c,0]$. We also have $[1,c,0]-[0,1,1] \cap \{x=z\} =[1,c+1,1]$, and $[1,c+1,1]-[0,0,1] \cap \{z=0\} = [1,c+1,0]$. Thus if $c \in \K'$, then $c+1 \in \K'$ and $-c \in \K'$. By interchanging the roles of $x$ and $y$, we get that if $[c,1,0]$ is a $(q+1)$-point, then so is $[c+1,1,1]$.

Let $a,b \in \K'$ with $a \neq b$, $b \neq 0$. Then $[1,a,0]$ and$[1,b,0]$ are $(q+1)$-points, and by the previous paragraph we know that this implies that $[1,a+1,1]$ and $[b^{-1}+1,1,1]$ are $(q+1)$-points. Observe that $[1,a+1,1]-[b^{-1}+1,1,1] \cap \{z=0\}=[1,-ab,0] $. Thus $ab \in \K'$. 

We now want to show that $c \in \K'$ implies $c^2 \in \K'$. Say $c \neq 0,1,-1$. If $c^m=1$ for $m<5$, then $c^2$ is $1$, $-c-1$ or $-1$, already in $k'$. Note that $c(c+1)=c^2+c \in \K'$, and so $c^2+c+1 \in \K'$. Also $c^2+c+1 \neq c-1$ (or we would already have $c^2=-2 \in k'$), and so $(c-1)(c^2+c+1)=c^3-1 \in \K'$. Thus $c^3 \in \K'$. As $c^3\neq c$, then $c^4 \in k'$. As $k'$ is finite there must be an $m$ such that $c^m=1$. Multiplying $c^4$ by $c$ enough times, we get $c^{m-1}\in k'$, and then $c^{m-1}c^3=c^2 \in k'$.

In this way, because $\K'$ has finite order, given a nonzero $c \in \K'$, we have $c^{-1} \in \K'$. Therefore given $a,b \in \K'$, we have that $a+b=b(ab^{-1}+1) \in \K'$. This completes the proof that $\K'=\F_{q}$, and so $\AR$ is projectively equivalent to a copy of the finite projective plane arrangement $\P_{\F_q}^2$ in $\P_{\K}^2$. 
\end{proof}

\begin{remark}
One can ask if there are other situations in which incidence structures satisfying the condition ``the number of points equals the number of block''  can be realised as the incidence structure of some curve arrangement. In \cite{E15}, the first author showed that the incidence structure of certain Ryser designs (see \cite[\textsection 14]{IoSh06} for definitions) are realised as arrangements of curves in Hirzebruch surfaces over a finite field. 
\end{remark}

\section{Chern Numbers} \label{s2}

We now define the key combinatorial invariants for line arrangements that we will study. Surprisingly, various general properties of line arrangements can be expressed with these invariants. 

\begin{definition}
Let $\AR$ be an arrangement of $d$ lines. We define the integers 
$$\bar{c}_1^2(\AR)= 9-5d+\sum_{m \geq 2} (3m-4) t_m \ \ \ \ \text{and} \ \ \ \ \bar{c}_2(\AR)= 3-2d + \sum_{m \geq 2} (m-1) t_m.$$ They are called the \textit{Chern numbers} of $\AR$.
\label{chern}
\end{definition}

\begin{remark}
We have defined the Chern numbers of line arrangements only in terms of their incidence structure, and as such, the definition may seem arbitrary. It becomes more natural if we see these invariants using Hirzebruch's original construction, which we summarise next (see \cite{Hirz83} or \cite[Chapter 5]{Tr16} for full details). Let $\sigma \colon X \to \P^2$ be the blow-up of all the $m$-points of a line arrangement, with $m>2$. Let $D$ be the reduced total transform of the arrangement under $\sigma$, and so it contains all strict transforms of the lines and all exceptional divisors of $\sigma$. Let $\Omega_X^1(\log D)$ be the rank two vector bundle on $X$ of log differentials with poles in $D$. Let $c_i(\Omega_X^1(\log D)^{*})$, $i=1,2$, be the Chern classes of the dual of $\Omega_X^1(\log D)$ (see \cite[\textsection 1.4 and \textsection 3.2]{Tr16}). Now we define the Chern numbers of the line arrangement in terms of these Chern classes as: $\bar{c}_1^2=c_1 \cdot c_1$ and $\bar{c}_2= c_2$. See also \cite[\textsection 2 and \textsection 4]{U10a}, where this process is done in more generality for arrangements of curves in algebraic surfaces. 
\label{logdiff}
\end{remark}

\begin{proposition}
If $\AR$ has $t_d=t_{d-1}=0$, then its Chern numbers are positive.
\label{positive}
\end{proposition}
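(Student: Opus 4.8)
The plan is to reduce everything to the positivity of $\bar{c}_1^2$. A direct computation from the definitions gives $3\bar{c}_2 - \bar{c}_1^2 = \sum_{m\ge 2} t_m - d$, so once $\bar{c}_1^2 > 0$ is established, the Erd\H{o}s--de Bruijn inequality (Theorem \ref{erdos}, applicable because $t_d = 0$ makes $\AR$ nontrivial) yields $\sum_m t_m \ge d$ and hence $3\bar{c}_2 = \bar{c}_1^2 + \big(\sum_m t_m - d\big) \ge \bar{c}_1^2 > 0$. Note that the hypothesis $t_d = t_{d-1} = 0$ is vacuous for $d \le 3$, so I may assume $d \ge 4$; it also forces every point to have multiplicity $m \le d-2$, i.e. $m-2 \le d-4$.

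For the main step I would first rewrite $\bar{c}_1^2$ using the pair-counting identity $\sum_{m\ge2} m(m-1)t_m = d(d-1)$ from the start of \textsection\ref{s1}. Since $3m-4 = m(m-1) - (m-2)^2$, I can replace $\sum(3m-4)t_m$ by $d(d-1) - \sum_{m\ge3}(m-2)^2 t_m$, and then $9 - 5d + d(d-1) = (d-3)^2$ turns the definition into
$$\bar{c}_1^2 = (d-3)^2 - \sum_{m \ge 3}(m-2)^2 t_m.$$
Thus it suffices to prove $\sum_{m\ge3}(m-2)^2 t_m < (d-3)^2$.

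To bound the sum I would set $S_1 = \sum_{m\ge3}(m-2)t_m$, $S_2 = \sum_{m\ge3}(m-2)^2 t_m$ and $N = \sum_{m\ge2} t_m$, so that the pair-counting identity reads $S_2 + 3S_1 + 2N = d(d-1)$. Two inputs then close the estimate: the multiplicity bound $m \le d-2$ gives $S_2 \le (d-4)S_1$, while Theorem \ref{erdos} gives $N \ge d$. Eliminating $S_1$ between the identity and the first inequality yields $(d-1)S_2 \le (d-4)\big(d(d-1) - 2N\big)$, and then $N \ge d$ gives $(d-1)S_2 \le (d-4)\,d(d-3)$, so that
$$\bar{c}_1^2 \ge (d-3)^2 - \frac{d(d-3)(d-4)}{d-1} = \frac{3(d-3)}{d-1} > 0.$$

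The step I expect to be the real obstacle is recognising that the multiplicity bound alone is \emph{not} enough: using only $m \le d-2$ together with the pair-counting identity (the linear-programming relaxation in the $t_m$) permits a single point of very high multiplicity, and in fact allows $\sum(m-2)^2 t_m$ to exceed $(d-3)^2$. What rescues the argument is that such near-extremal configurations have far too few points, so the Erd\H{o}s--de Bruijn bound $N \ge d$ is precisely the extra constraint that excludes them. Getting the two inequalities to combine cleanly --- rather than bounding $\max(m-2)$ and $S_1$ separately, which loses too much --- is the crux; the remaining verifications ($d\ge 4$ and the displayed algebra) are routine.
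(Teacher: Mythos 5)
Your proof is correct, but it follows a genuinely different route from the paper's. The paper argues by induction on the number of lines: the base case $d=4$ is the nodal arrangement with $\bar{c}_1^2=\bar{c}_2=1$, and the inductive step removes a line passing through at least three points and tracks the resulting change in both Chern numbers directly, without ever invoking Theorem \ref{erdos}. You instead give a closed-form global argument: the rewriting $\bar{c}_1^2=(d-3)^2-\sum_{m\ge 3}(m-2)^2t_m$ (via $3m-4=m(m-1)-(m-2)^2$ and the pair-counting identity), the multiplicity bound $m\le d-2$, and the de Bruijn--Erd\H{o}s inequality $\sum_m t_m\ge d$ combine into the explicit estimate $\bar{c}_1^2\ge 3(d-3)/(d-1)>0$, after which $3\bar{c}_2=\bar{c}_1^2+\bigl(\sum_m t_m-d\bigr)>0$ disposes of $\bar{c}_2$. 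I verified the algebra; in particular your elimination $(d-1)S_2\le(d-4)\bigl(d(d-1)-2N\bigr)$ can be reached by substituting $3S_1=d(d-1)-2N-S_2$ rather than dividing by $d-4$, so the case $d=4$ is covered, and the hypothesis does exclude all arrangements with $d\le 3$ as you claim. Your route costs a dependence on Theorem \ref{erdos} (harmless, since it is proved earlier and is exactly what the paper uses for Proposition \ref{combineq}), but it buys an effective lower bound for $\bar{c}_1^2$ and correctly identifies why the multiplicity bound alone cannot close the estimate; the paper's induction is more elementary and self-contained but yields only bare positivity.
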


\begin{proof}
We note that a quasi-trivial arrangement has $\bar{c}_1^2=\bar{c}_2=0$. Suppose $d=4$. Then $\AR$ only has nodes (under the conditions of the proposition) and therefore $\bar{c}_1^2(\AR)=1$ and $\bar{c}_2(\AR)=1$. We now argue by induction on $d$. Assume that $\AR$ has $d+1 \geq 5$ lines, and let $L \in \AR$ be a line passing by $t \geq 3$ points (it must exist by the assumptions). The arrangement $\AR \setminus L$ is not trivial, and so $$\bar{c}_1^2(\AR) \geq \bar{c}_1^2(\AR \setminus L)-5 +2t \geq \bar{c}_1^2(\AR \setminus L)+1 \geq 1$$ and $$\bar{c}_2(\AR)= \bar{c}_2(\AR \setminus L)-2 +t \geq \bar{c}_1^2(\AR \setminus L)+1 \geq 1.$$
\end{proof}

\begin{proposition} (see \cite[Theorem (5.1)]{So84}) Let $\AR$ be an arrangement of $d$ lines such that $t_d=t_{d-1}=0$. Then,
$$\frac{2d-6}{d-2}\leq \frac{\bar{c}_1^2}{\bar{c}_2} \leq 3.$$ Left equality holds if and only if $t_2={d \choose 2}$ (i.e. the arrangement has only nodes), and right equality holds if and only if $\sum_{m \geq 2} t_m=d$ (and so $\AR$ is a finite projective plane arrangement).
\label{combineq}
\end{proposition}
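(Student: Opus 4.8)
The plan is to reduce both inequalities to sign statements about the integers $t_m$. By Proposition~\ref{positive} we have $\bar{c}_2 > 0$ (and the hypothesis $t_d = t_{d-1}=0$ forces $d \geq 4$, so $d-2 > 0$), hence I may clear denominators freely: the right inequality is equivalent to $\bar{c}_1^2 \leq 3\bar{c}_2$, and the left one to $(2d-6)\bar{c}_2 \leq (d-2)\bar{c}_1^2$. Two ingredients will then do all the work, namely the counting identity recalled just before Theorem~\ref{erdos}, which I rewrite as $\sum_{m \geq 2} m(m-1)\,t_m = d(d-1)$, and the lower bound $\sum_{m \geq 2} t_m \geq d$ of Theorem~\ref{erdos}.

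For the upper bound I would simply expand Definition~\ref{chern}. Collecting the coefficient of each $t_m$ and the constants gives the clean identity
\[ 3\bar{c}_2 - \bar{c}_1^2 = \sum_{m \geq 2} t_m - d, \]
so $\bar{c}_1^2 \leq 3\bar{c}_2$ is exactly Theorem~\ref{erdos}. Since $t_d = t_{d-1}=0$ rules out the trivial and quasi-trivial arrangements, equality forces $\sum_{m \geq 2} t_m = d$, which by Theorem~\ref{erdos} occurs precisely for a finite projective plane arrangement.

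For the lower bound I would compute $(d-2)\bar{c}_1^2 - (2d-6)\bar{c}_2$ the same way. Separating constants from the $t_m$-terms produces a constant $-d(d-1)$ together with $\sum_{m \geq 2}\bigl(d(m-2)+2\bigr)t_m$; substituting $-d(d-1) = -\sum_{m \geq 2} m(m-1)\,t_m$ from the counting identity collapses everything into one sum with coefficient $-m^2 + (d+1)m - 2(d-1)$ at $t_m$. The key observation is that this quadratic in $m$ factors as $(m-2)(d-1-m)$, yielding
\[ (d-2)\bar{c}_1^2 - (2d-6)\bar{c}_2 = \sum_{m \geq 2}(m-2)(d-1-m)\,t_m. \]
The hypothesis $t_d = t_{d-1}=0$ now does exactly the right thing: every $m$ with $t_m \neq 0$ satisfies $2 \leq m \leq d-2$, so $m-2 \geq 0$ and $d-1-m \geq 1 > 0$, whence each summand is nonnegative and the inequality follows. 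Equality holds iff every nonzero term has $m=2$ (the factor $d-1-m$ cannot vanish in range), i.e. iff $t_2 = \binom{d}{2}$. The step I would check most carefully — and the only one with any subtlety — is this factorization together with the sign bookkeeping, since it is precisely the exclusion of $(d-1)$- and $d$-points that keeps $d-1-m$ positive and thereby pins down the equality case.
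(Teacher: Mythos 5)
Your proposal is correct and follows essentially the same route as the paper: the right inequality via the identity $3\bar{c}_2-\bar{c}_1^2=\sum_{m\geq 2}t_m-d$ combined with Theorem~\ref{erdos}, and the left inequality via the expansion $(d-2)\bar{c}_1^2-(2d-6)\bar{c}_2=\sum_{m\geq 2}t_m\bigl(-m^2+(d+1)m+2-2d\bigr)$, whose coefficient you usefully factor as $(m-2)(d-1-m)$ (the paper leaves it unfactored but argues the same sign analysis on $2\leq m\leq d-2$). The equality cases are handled identically.
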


\begin{proof}
The left inequality is equivalent to 
$$0\leq(d-2)\bar{c}_1^2-(2d-6)\bar{c}_2 = \sum_{m \geq 2}^{d-2} t_m(-m^2+m(1+d)+(2-2d))$$ but $-m^2+m(1+d)+(2-2d) \geq 0$ for all $2 \leq m \leq d-1$. Moreover we have $-m^2+m(1+d)+(2-2d) > 0$ for all $3 \leq m \leq d-2$. But recall that by hypothesis $t_d=t_{d-1}=0$. This proves the first inequality. 

The second inequality is equivalent to showing that $$\bar{c}_1^2-3\bar{c}_2 = d - \sum_{m \geq 2} t_m\leq 0,$$ but this follows from Theorem \ref{erdos}.
 
\end{proof}

All statements about Chern numbers so far have been proven combinatorially, without referencing the ground field $\K$. The next theorem shows the one can strengthen Proposition \ref{combineq} when $\K=\R$ and $\K=\C$.  

\begin{theorem}
Let $\AR$ be an arrangement of $d$ lines with $t_d=t_{d-1}=0$.
\begin{itemize}
\item[1)] If $\K=\R$, then $\bar{c}_1^2 \leq \frac{5}{2} \bar{c}_2$. Equality is achieved if and only if $\AR$ is simplicial (see Example \ref{simplicial}).

\item[2)] If $\K=\C$, then $\bar{c}_1^2 \leq \frac{8}{3} \bar{c}_2$. Equality is achieved if and only if $\AR$ is the dual Hesse arrangement (see Example \ref{Ceva}).
\end{itemize}
\label{HS}
\end{theorem}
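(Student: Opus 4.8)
The plan is to convert each Chern-slope inequality into an equivalent linear inequality among the numbers $t_m$, and then to supply the geometric input separately in each case. Substituting the formulas of Definition \ref{chern} and simplifying, one obtains the identities
$$\frac{5}{2}\bar{c}_2-\bar{c}_1^2=\frac{1}{2}\Bigl(t_2-3-\sum_{m\geq 4}(m-3)t_m\Bigr),$$
$$\frac{8}{3}\bar{c}_2-\bar{c}_1^2=\frac{1}{3}\Bigl(2t_2+t_3-d-3-\sum_{m\geq 5}(m-4)t_m\Bigr).$$
The value $8/3$ is singled out as the unique slope for which the coefficient of $t_4$ vanishes, which explains why $t_4$ is absent on the right. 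Thus 1) is equivalent to $t_2\geq 3+\sum_{m\geq 4}(m-3)t_m$ and 2) is equivalent to $2t_2+t_3\geq d+3+\sum_{m\geq 5}(m-4)t_m$, and in each case equality in the Chern-slope bound corresponds exactly to equality in the combinatorial inequality.

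For 1) I would prove the combinatorial inequality, which is Melchior's inequality, directly from the cell decomposition of $\P^2_{\R}$ induced by $\AR$. Let $V$, $E$, $F$ count the vertices, edges and faces. Then $V=\sum_{m\geq 2}t_m$, and counting edge-ends at each vertex gives $E=\sum_{m\geq 2}m\,t_m$. Because the hypothesis $t_d=t_{d-1}=0$ rules out pencils and near-pencils, distinct lines meet in a single point, so no face is a bigon and every face has at least three sides; hence $2E\geq 3F$. Feeding this and the values of $V,E$ into Euler's formula $V-E+F=1$ for $\P^2_{\R}$ yields $\sum_{m\geq 2}(m-3)t_m\leq -3$, which is exactly Melchior's inequality after isolating the $m=2$ term. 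Equality holds in $2E\geq 3F$ precisely when every face is a triangle, i.e.\ when $\AR$ is simplicial (Example \ref{simplicial}); through the first identity above this is the asserted equality case $\bar{c}_1^2=\tfrac{5}{2}\bar{c}_2$.

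Part 2) is the substantial one, and here the input is no longer topological but complex-analytic: the required inequality is Hirzebruch's inequality, whose proof I would only sketch, following \cite{Hirz83} and the circle of ideas in Remark \ref{logdiff}. One passes to a suitable abelian cover of $\P^2_{\C}$ ramified along the $d$ lines, resolves it to a smooth projective surface $Y$, and applies the Bogomolov--Miyaoka--Yau inequality $c_1^2(Y)\leq 3c_2(Y)$; the hypothesis $t_d=t_{d-1}=0$ is what guarantees that $Y$ is of general type, so that BMY is available. Expressing the Chern numbers of $Y$ through the covering data and rearranging, with the help of the identity $\binom{d}{2}=\sum_{m}\binom{m}{2}t_m$ and Theorem \ref{erdos}, produces the combinatorial inequality equivalent to 2).

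The main obstacle, as I see it, is the equality case. Equality in 2) forces equality in the BMY inequality for $Y$, which by the theorem of Yau and Miyaoka means that $Y$ is a quotient of the complex two-ball, an extremely rigid condition. The task is then to show that, among arrangements with $t_d=t_{d-1}=0$, the dual Hesse arrangement of Example \ref{Ceva} ($d=9$, $t_3=12$, all other $t_m=0$) is the unique configuration whose associated cover is such a ball quotient of slope exactly $8/3$. I would first check directly that the dual Hesse arrangement saturates the inequality (indeed $2\cdot 0+12=9+3$), and then argue, combining the rigidity of the ball-quotient condition with the combinatorial constraints above, that no other arrangement can achieve equality. Pinning down this uniqueness is the delicate geometric step and is where I would concentrate the effort.
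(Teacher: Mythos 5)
Your part 1) is correct and is essentially the paper's own argument. The paper likewise rewrites $5\bar{c}_2-2\bar{c}_1^2$ as $-3-\sum_{m\geq 2}(m-3)t_m$ and identifies this quantity with $\sum_{m\geq 3}(m-3)p_m\geq 0$ via the Euler characteristic of $\P^2_{\R}$, with equality exactly when every face is a triangle; your Euler-formula derivation of Melchior's inequality is the same computation carried out in slightly more detail. Both of your algebraic identities check out, and the observation that $8/3$ is the unique slope annihilating the coefficient of $t_4$ is a nice way to motivate the constant.

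Part 2) is where the genuine gap lies. To be fair, the paper gives no proof here either (it cites Sommese, Hirzebruch, Tretkoff and Urz\'ua), but your sketch as written would not close. First, applying Bogomolov--Miyaoka--Yau to the resolved abelian cover $Y$ branched along the lines is exactly Hirzebruch's construction, and what it produces is the inequality $t_2+\tfrac{3}{4}t_3\geq d+\sum_{m\geq 5}(m-4)t_m$, which is \emph{not} the inequality $2t_2+t_3\geq d+3+\sum_{m\geq 5}(m-4)t_m$ that you correctly identified as equivalent to $\bar{c}_1^2\leq\tfrac{8}{3}\bar{c}_2$; subtracting, the bridge between them is the nontrivial claim $4t_2+t_3\geq 12$ (sharp for the dual Hesse), so the $8/3$ bound does not fall out of BMY on the cover alone --- the actual proofs use the logarithmic/orbifold Miyaoka--Yau inequality for the pair. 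Second, the assertion that $t_d=t_{d-1}=0$ guarantees $Y$ is of general type is unjustified and needs care (controlling the Kodaira dimension of the Hirzebruch covers is a separate step). Third, the equality case --- uniqueness of the dual Hesse --- is precisely the hard content of the statement, and you explicitly defer it; invoking rigidity of ball quotients is the right keyword but by itself does not exclude other configurations. So part 1) stands as a proof, while part 2) is a correct plan at the level of the paper's citations rather than an argument.
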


\begin{proof}
We follow \cite[p.115]{Hirz83} for the proof of 1). As we noted in Example \ref{simplicial}, a real arrangement partitions $\P_{\R}^2$ in polygons. This can be used to compute the topological Euler characteristic of $\P_{\R}^2$ which is equal to $1$. With that one obtains $$\sum_{m \geq 3} (m-3) p_m = -3 - \sum_{m \geq 2} (m-3)t_m,$$ where $p_m$ is the number of $m$-gons. On the other hand, one can check that $$5 \bar{c}_2 -2 \bar{c}_1^2 = -3 - \sum_{m \geq 2} (m-3)t_m,$$ and so we get what we want for 1).

The claim in 2) is essentially the Hirzebruch-Sakai inequality \cite{Hirz83}, which comes form the Bogomolov-Miyaoka-Yau inequality for algebraic surfaces. See \cite{Hirz83}, \cite[Chapter 4]{Tr16}, \cite[Theorem 5.3]{So84}, \cite[Proposition II.8]{U08} for details.
\end{proof}

\section{Density of Chern slopes} \label{s3}

Analogous to the geography problem for surfaces of general type (cf. \cite{P87}), we can talk about the geography problem for line arrangements over a fixed field $\K$: Given $(a,b) \in \Z^2$, is there a line arrangement over $\K$ with $\bar{c}_1^2=a$ and $\bar{c}_2=b$? From now on, we will restrict all line arrangements of $d$ lines to satisfy $t_d=t_{d-1}=0$. We recall that for every $\K$  $$2-\frac{2}{d-2}\leq \frac{\bar{c}_1^2}{\bar{c}_2} \leq 3$$ by Proposition \ref{combineq}. The geography problem for line arrangements could be hard to solve in general. A slightly easier variant of the geography problem is to ask: What positive rational numbers can appear as the quotient $\frac{\bar{c}_1^2}{\bar{c}_2}$ of a line arrangement? Our focus in this section is to obtain constraints for the possible values of the \textit{Chern slope} $\frac{\bar{c}_1^2}{\bar{c}_2}$ for a fixed $\K$. For example, we have already seen that $\frac{\bar{c}_1^2}{\bar{c}_2}=3$ can only be realised by a finite projective plane arrangement, or that over $\C$ the only line arrangement satisfying $\frac{\bar{c}_1^2}{\bar{c}_2}=\frac{8}{3}$ is the dual Hesse arrangement. Aside from the results we already have about specific values of the Chern slope, our goal now is to find all accumulation points of Chern slopes for a given field $\K$. We start with a simple corollary of Proposition \ref{combineq}.

\begin{corollary}
If $r$ is an accumulation point of Chern slopes, then $r \in [2,3]$. 
\label{acc}
\end{corollary}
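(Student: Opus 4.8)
The plan is to combine the two-sided bound from Proposition \ref{combineq} with a finiteness observation that forces any accumulating sequence of slopes to come from arrangements with unboundedly many lines. Throughout, I read ``$r$ is an accumulation point'' as: every neighbourhood of $r$ contains infinitely many Chern slopes.

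The upper bound is immediate. Every admissible arrangement satisfies $\bar{c}_1^2/\bar{c}_2 \leq 3$ by Proposition \ref{combineq}, so the set of all Chern slopes is contained in $(-\infty,3]$, and hence so is every accumulation point.

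For the lower bound, first I would record that for each fixed number of lines $d$ there are only finitely many Chern slopes. Indeed, the Chern numbers depend only on the data $(t_2,\ldots,t_d)$, which satisfy $\sum_{m \geq 2} {m \choose 2} t_m = {d \choose 2}$ with each ${m \choose 2} \geq 1$; hence every $t_m$ is bounded by ${d \choose 2}$, leaving only finitely many admissible tuples, and therefore finitely many values of $(\bar{c}_1^2,\bar{c}_2)$. Consequently, if $r$ is an accumulation point, the infinitely many slopes in a given neighbourhood of $r$ cannot all arise from a bounded range of $d$, so there must be a sequence of arrangements $\AR_n$ with $d_n \to \infty$ whose slopes $s_n$ converge to $r$. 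Finally I would invoke the left-hand inequality of Proposition \ref{combineq} in the form $s_n \geq 2 - \frac{2}{d_n-2}$: since $d_n \to \infty$, the right-hand side tends to $2$, so for every $\epsilon>0$ we have $s_n \geq 2-\epsilon$ for all large $n$, whence $r = \lim_n s_n \geq 2$. Together with $r \leq 3$ this yields $r \in [2,3]$.

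The only step requiring genuine care is the finiteness claim together with its use: one must ensure that the bounded-$d$ arrangements (which contribute only finitely many slopes) can be discarded, so that the accumulation is genuinely realised along a sequence with $d_n \to \infty$, where the lower bound $2 - \frac{2}{d_n-2}$ becomes effective. Everything else is a direct application of the bounds already established in Proposition \ref{combineq}.
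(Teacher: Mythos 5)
Your proof is correct and follows essentially the same route as the paper: the upper bound is immediate from Proposition \ref{combineq}, and the lower bound combines the finiteness of slopes for each fixed $d$ with the bound $\frac{2d-6}{d-2}\to 2$. Your write-up is in fact a slightly more careful version of the paper's argument, which states the finitely-many-$d$ step somewhat loosely.
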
   

\begin{proof}
By Proposition \ref{combineq} we know that $1\leq \frac{\bar{c}_1^2}{\bar{c}_2}\leq 3$. We note that after fixing the number of lines there are only finitely many different combinatorial arrangements, and so finitely many possible Chern slopes. Let $s$ denote the Chern slope of some line arrangement. If $s < 2$, then $s < \frac{2d-6}{d-2}$ for only finitely many $d$. But we know that an arrangement of $d$ lines satisfies $\frac{2d-6}{d-2}\leq \frac{\bar{c}_1^2}{\bar{c}_2}$ by Proposition \ref{combineq}, and so the Chern slopes cannot accumulate below 2.
\end{proof}

The following is inspired by the density lemma in \cite[Lemma 11.1]{E15}. 

\begin{lemma}
Let $\K$ be an infinite field. Let $\AR_n$ be a collection of arrangements of $l(n)$ lines over a field $\K$ with $\lim_{n \to \infty} \frac{\bar{c}_1^2}{\bar{c}_2} = c >2$ and $\lim_{n \to \infty} l(n)= \infty$. Assume there is $h \in ]1,2]$ such that $\lim_{n \to \infty} \frac{\bar{c}_1^2}{l(n)^h}=a>0$. Then Chern slopes of line arrangements over $\K$ are dense in $[2,c]$.  
\label{density}
\end{lemma}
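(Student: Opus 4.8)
The plan is to interpolate between the slope $c$ and the slope $2$ by adding lines in general position. The hypothesis that $\K$ is infinite is exactly what makes this possible: the lines of $\P^2_{\K}$ through a fixed point form a line in the dual plane, so only finitely many lines pass through the finitely many existing intersection points, and since $\K$ is infinite we may always adjoin a new line avoiding all of them, and more generally adjoin any number $k$ of lines that are mutually generic and generic with respect to $\AR_n$. Such a line meets a current arrangement of $D$ lines in exactly $D$ new double points, so by Definition \ref{chern} (only the $t_2$ contribution changes) it raises $\bar{c}_1^2$ by $2D-5$ and $\bar{c}_2$ by $D-2$. As $D\to\infty$ the slope $\tfrac{2D-5}{D-2}$ of this increment tends to $2$, and this is the mechanism that drags the global slope down from $c$ toward $2$.

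Concretely, writing $d=l(n)$, $C_1=\bar{c}_1^2(\AR_n)$ and $C_2=\bar{c}_2(\AR_n)$, after adjoining $k$ generic lines (which create $kd+\binom{k}{2}$ new nodes) the Chern numbers become
\[
\bar{c}_1^2 = C_1 + 2kd + k^2 - 6k, \qquad \bar{c}_2 = C_2 + kd + \tfrac{k^2}{2} - \tfrac{5k}{2}.
\]
All the hypotheses enter through the asymptotics $C_1\sim a\,d^{h}$ and, since the slope tends to $c$, $C_2\sim \tfrac{a}{c}\,d^{h}$. I would then substitute $k=\lfloor \lambda\, d^{\,h-1}\rfloor$ for a real parameter $\lambda\ge 0$. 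Because $h\in{]}1,2]$ we have $2h-2\le h$, so the $kd\sim\lambda d^{h}$ terms dominate (when $h<2$) or are comparable to (when $h=2$) the $k^2$ terms; dividing numerator and denominator by $d^{h}$ and letting $n\to\infty$ gives the slope
\[
\frac{\bar{c}_1^2}{\bar{c}_2}\longrightarrow f(\lambda):=\frac{a+2\lambda+\delta\lambda^2}{\tfrac{a}{c}+\lambda+\tfrac{\delta}{2}\lambda^2},\qquad \delta=\begin{cases} 1 & h=2,\\ 0 & 1<h<2.\end{cases}
\]
One checks $f(0)=c$ and $f(\lambda)\to 2$ as $\lambda\to\infty$, and $f$ is continuous, so by the intermediate value theorem its image contains $(2,c)$.

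Finally I would turn this into a genuine density statement. For a fixed large constant $M$, consider for each $n$ the finite sequence of slopes obtained by adjoining $k=0,1,\dots,K_n$ generic lines, where $K_n=\lfloor M d^{\,h-1}\rfloor$. Its first term tends to $c$ and its last to $f(M)$, which is within any prescribed distance of $2$ once $M$ is large. The decisive point—and the reason the hypothesis $h>1$ is indispensable—is that adjoining a single further line changes $\lambda$ by $d^{\,1-h}\to 0$, so consecutive slopes in the sequence differ by $o(1)$ uniformly ($f$ being Lipschitz on $[0,M]$). A discrete intermediate value argument then shows that these slopes form an $o(1)$-net of the interval between $f(M)$ and $c$; letting first $n\to\infty$ and then $M\to\infty$ yields density in $[2,c]$. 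I expect the main obstacle to be essentially bookkeeping: verifying that the lower-order terms (the $k^2$ term when $h<2$, and the corrections $-6k$ and $-\tfrac{5k}{2}$) are genuinely negligible after division by $d^{h}$, and that each augmented arrangement still satisfies the standing assumption $t_{d'}=t_{d'-1}=0$, which is clear since adjoining generic lines creates only nodes while increasing the line count. I would also record that every slope is at most $3$ by Proposition \ref{combineq}, hence $c\le 3$ and $[2,c]\subseteq[2,3]$, consistently with Corollary \ref{acc}.
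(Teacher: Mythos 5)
Your proposal is correct and follows essentially the same route as the paper: adjoin $k=\lfloor \lambda\, l(n)^{h-1}\rfloor$ generic lines, compute the same increments $2kd+k^2-6k$ and $kd+\tfrac{k^2}{2}-\tfrac{5k}{2}$, divide by $l(n)^h$, and observe that the limiting slope function (with the $h=2$ versus $h<2$ dichotomy) is continuous with range $]2,c[$. Your additional touches---justifying the existence of generic lines via the infinitude of $\K$, checking the standing assumption $t_{d'}=t_{d'-1}=0$, and the discrete intermediate-value refinement---are careful extras the paper leaves implicit, but the core argument is identical.
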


\begin{proof}
Let $x \in \R_{>0}$. We choose $n_0 \gg 0$ such that $d(n)=[x l(n)^{h-1}]$ are positive integers for all $n> n_0$, where $[y]$ is the integral part of $y$. 

For $n>n_0$, we consider the arrangements of $d(n)+l(n)$ lines $\AR'_n$ over $\K$ defined as $\AR_n$ together with $d(n)$ general lines, this is, $d(n)$ lines which add only nodes and no other $m$-points to $\AR_n$. Then $$\frac{\bar{c}_1^2(\AR'_n)}{\bar{c}_2(\AR'_n)}= \frac{\bar{c}_1^2(\AR_n) + 2 l(n)d(n)+ d(n)^2 - 6 d(n)}{\bar{c}_2(\AR_n) + l(n)d(n) + \frac{d(n)^2}{2}- \frac{5d(n)}{2}},$$ and so $$\frac{\bar{c}_1^2(\AR'_n)}{\bar{c}_2(\AR'_n)}= \frac{\frac{\bar{c}_1^2(\AR_n)}{l(n)^h} + 2 \frac{d(n)}{l(n)^{h-1}}+ \frac{d(n)^2}{l(n)^h} - 6 \frac{d(n)}{l(n)^h}}{\frac{\bar{c}_2(\AR_n)}{l(n)^h} + \frac{d(n)}{l(n)^{h-1}} + \frac{d(n)^2}{2l(n)}- \frac{5d(n)}{2l(n)}}.$$

Then, if $h <2$,  we have $\lim_{n \to \infty} \frac{\bar{c}_1^2(\AR'_n)}{\bar{c}_2(\AR'_n)} = \frac{a+2x}{\frac{a}{c} +x}=f(x)$, and if $h=2$, we get $\lim_{n \to \infty} \frac{\bar{c}_1^2(\AR'_n)}{\bar{c}_2(\AR'_n)} = \frac{a+2x+x^2}{\frac{a}{c} +x + \frac{x^2}{2}}=g(x)$. We note that both real functions $f(x)$ and $g(x)$ are continuous in $\R_{>0}$, and their range is $]2,c[$.    
\end{proof}

With this lemma and Corollary \ref{acc} we can give a complete description of the geography problem for line arrangements over $\overline{\F}_p$ and $\R$.

\begin{corollary}
The set of accumulation points of Chern slopes of arrangements over $\overline{\F}_p$ is the interval $[2,3]$. 
\label{charp}
\end{corollary}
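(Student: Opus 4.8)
\section*{Proof proposal}

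The plan is to prove the two inclusions separately. One direction is immediate: Corollary \ref{acc} already shows that every accumulation point of Chern slopes lies in $[2,3]$, regardless of the field. So the real content is the reverse inclusion, namely that \emph{every} point of $[2,3]$ is an accumulation point of Chern slopes over $\overline{\F}_p$. For this I would invoke Lemma \ref{density} with the target value $c=3$, which reduces the problem to exhibiting a single family of arrangements over $\overline{\F}_p$ meeting the three hypotheses of that lemma: a number of lines tending to infinity, Chern slope tending to $3$, and a positive limit $\lim_{n\to\infty}\bar{c}_1^2/l(n)^h$ for some exponent $h\in\,]1,2]$. Since $\overline{\F}_p$ is infinite, the lemma applies once such a family is produced.

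The natural candidate family is $\AR_n=\P^2_{\F_{p^n}}$, the finite projective plane arrangements of Example \ref{finite}, viewed inside $\P^2_{\overline{\F}_p}$ via the inclusion $\F_{p^n}\subseteq\overline{\F}_p$. Writing $q=p^n$, each $\AR_n$ has $l(n)=d=q^2+q+1$ lines and $t_{q+1}=d$, with all other $t_m=0$. One checks $q+1\leq d-2$ for every $q\geq 2$, so $t_d=t_{d-1}=0$ and these arrangements fall within the scope of Section \ref{s3}. Clearly $l(n)=q^2+q+1\to\infty$. Moreover, by Proposition \ref{combineq} a finite projective plane arrangement realises the extremal slope $3$; indeed a direct substitution into Definition \ref{chern} gives
$$\bar{c}_1^2(\AR_n)=9-5d+(3(q+1)-4)d=9+3(q-2)(q^2+q+1)$$
and
$$\bar{c}_2(\AR_n)=3-2d+q\,d=3+(q-2)(q^2+q+1),$$
so that $\bar{c}_1^2(\AR_n)=3\,\bar{c}_2(\AR_n)$ and the slope is \emph{identically} $3$ along the family, whence $\lim_{n\to\infty}\bar{c}_1^2/\bar{c}_2=3=c>2$.

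It remains to pin down the exponent $h$. Since $\bar{c}_1^2(\AR_n)\sim 3q^3$ while $l(n)=q^2+q+1\sim q^2$, the choice $h=\tfrac32\in\,]1,2]$ yields
$$\lim_{n\to\infty}\frac{\bar{c}_1^2(\AR_n)}{l(n)^{h}}=\lim_{n\to\infty}\frac{9+3(q-2)(q^2+q+1)}{(q^2+q+1)^{3/2}}=3>0,$$
so all hypotheses of Lemma \ref{density} hold with $c=3$. The lemma then gives that the Chern slopes of arrangements over $\overline{\F}_p$ are dense in $[2,3]$. A dense subset of the nondegenerate interval $[2,3]$ is infinite and has every point of the interval as an accumulation point, so combining with Corollary \ref{acc} we conclude that the set of accumulation points is exactly $[2,3]$. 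I do not expect a genuine obstacle here: the only points requiring care are verifying $t_d=t_{d-1}=0$ for the family, identifying the exponent $h=\tfrac32$ forced by the growth rates $\bar{c}_1^2\sim q^3$ and $d\sim q^2$, and noting that the general lines added in Lemma \ref{density} can be chosen over the infinite field $\overline{\F}_p$.
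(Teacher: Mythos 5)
Your proposal is correct and follows exactly the paper's route: apply Lemma \ref{density} to the finite projective plane arrangements $\P^2_{\F_{p^n}}$ with $c=3$ and $h=\tfrac{3}{2}$, combined with Corollary \ref{acc} for the upper bound. The paper's proof is a one-line citation of these choices, and your computations verifying the hypotheses (slope identically $3$, $\bar{c}_1^2\sim 3q^3$, limit $a=3$) are accurate fillings-in of the same argument.
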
  

\begin{proof}
We apply Lemma \ref{density} for the collection $\AR_n$ of finite projective plane arrangements (Example \ref{finite}) given by $\P^2_{\F_{p^n}}$, where $l(n)=p^{2n} + p^n +1$. Here $c=3$ and we use $h=\frac{3}{2}$.   
\end{proof}

\begin{corollary}
The set of accumulation points of Chern slopes of arrangements over $\R$ is the interval $[2,\frac{5}{2}]$. 
\label{real}
\end{corollary}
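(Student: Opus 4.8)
The plan is to prove the two containments separately, using Theorem \ref{HS}(1) for the upper bound and Lemma \ref{density} for the lower bound, exactly as was done for $\overline{\F}_p$ in Corollary \ref{charp}. For the containment ``accumulation points $\subseteq [2,\frac{5}{2}]$'', I would invoke Theorem \ref{HS}(1): every real arrangement with $t_d=t_{d-1}=0$ satisfies $\bar{c}_1^2 \leq \frac{5}{2}\bar{c}_2$, so every Chern slope over $\R$ is at most $\frac{5}{2}$, and hence so is every accumulation point. Corollary \ref{acc} already confines accumulation points to $[2,3]$ for any field, so combining the two gives that every accumulation point of real Chern slopes lies in $[2,\frac{5}{2}]$.

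For the reverse containment I would supply a family of real arrangements meeting the hypotheses of Lemma \ref{density} with $c=\frac{5}{2}$. The natural candidates are the simplicial arrangements built from regular polygons in Example \ref{simplicial}: for each $n$, take the $n$ sides of a regular $n$-gon together with its $n$ axes of symmetry, giving $l(n)=2n$ lines with $t_2=n$, $t_3=n(n-1)/2$, $t_n=1$, and $t_m=0$ otherwise. Since $n < 2n-1$ for $n\geq 2$, these satisfy $t_{l(n)}=t_{l(n)-1}=0$, so they are admissible inputs, and being simplicial they realise the equality case $\bar{c}_1^2=\frac{5}{2}\bar{c}_2$ of Theorem \ref{HS}(1); thus their Chern slope is identically $\frac{5}{2}$ and $c=\frac{5}{2}>2$.

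It remains to produce the exponent $h$ and the constant $a$ required by the lemma. Substituting the values of $t_m$ into Definition \ref{chern} gives $\bar{c}_1^2=\frac{5}{2}n^2-\frac{15}{2}n+5$, which is quadratic in $n$, so with $l(n)=2n$ and $h=2\in\,]1,2]$ one obtains $\lim_{n\to\infty}\bar{c}_1^2/l(n)^2=\frac{5/2}{4}=\frac{5}{8}=a>0$. Lemma \ref{density} then yields density of real Chern slopes in $]2,\frac{5}{2}[$, so $[2,\frac{5}{2}]$ lies in the closure of the set of real Chern slopes; since infinitely many distinct slopes accumulate at each such point (and $\frac{5}{2}$ itself is attained by the whole family), every point of $[2,\frac{5}{2}]$ is an accumulation point. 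Together with the first paragraph this gives that the set of accumulation points is exactly $[2,\frac{5}{2}]$.

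I expect no serious obstacle here, as the argument is parallel to Corollary \ref{charp}. The only points needing genuine care are verifying that the regular-polygon arrangements really do have $t_d=t_{d-1}=0$ (so that Theorem \ref{HS} and Lemma \ref{density} both apply to them), and confirming the quadratic leading term of $\bar{c}_1^2$ so that the hypothesis $\lim \bar{c}_1^2/l(n)^h=a>0$ holds for the allowed value $h=2$.
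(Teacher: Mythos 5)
Your proposal is correct and follows essentially the same route as the paper: the paper's proof of Corollary \ref{real} is exactly an application of Lemma \ref{density} to the regular-polygon simplicial arrangements of $2n$ lines with $c=\frac{5}{2}$ and $h=2$, with the upper bound left implicit via Theorem \ref{HS}(1). Your computation of $\bar{c}_1^2=\frac{5}{2}(n^2-3n+2)$ and $a=\frac{5}{8}$ checks out, so there is nothing to add.
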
 

\begin{proof}
We apply Lemma \ref{density} for the collection $\AR_n$ of arrangements of $2n$ lines given by regular polygons of $n$ sides (see Example \ref{simplicial}). Here $c=\frac{5}{2}$ and we take $h=2$. 
\end{proof}

We note that the simplicial arrangements given by regular polygons are not defined over $\Q$ in general. This is because all realizations are projectively equivalent strictly over $\R$, and for $n>6$ the regular $n$-gon is not defined by lines over $\Q$. See \cite[Theorem 3.6]{Cun11} for details.
 
With respect to Chern slopes, the highest family for line arrangements defined over $\Q$, we can produce is the following:

\begin{example}
For any $n\geq 3$, consider the lines $\{y=\alpha z/2\}$, $\{x=\alpha z/2\}$, $\{y=x+(\beta-n+1)z\}$, and $\{y=-x+(\beta+1)z\}$, with $\alpha$ and $\beta$ sweeping all non negative integers up to $2n$ and $2n-2$, respectively. This is an $n$ by $n$ array of ``right triangle arrangements" of $8n$ lines. We note that when ``$n=\infty$" we get an infinite simplicial arrangement in $\R^2$ with only right isosceles triangles. For a fixed $n$, it has $t_2=6n^2+6n-8$, $t_3=2n^2-6n+8$, $t_4=2n^2+2n-3$, $t_{2n-1}=2$, $t_{2n+1}=2$, $t_m=0$ else. Hence its Chern Slope is equal to $$\frac{38n^2 - 18n-7}{16n^2 - 8n - 2},$$ which converges to $2.375$ as $n$ tends to infinity.
\label{record}
\end{example}

By Lemma \ref{density}, Example \ref{record} implies that over $\Q$, any $r \in [2, 2.375]$ is an accumulation point of Chern Slopes.

\subsection{H-constants}

We now turn to an interesting connection between the Chern slopes of line arrangements, and the linear $H$-constants. $H$-constants were first introduced in \cite{Harb} to study the bounded negativity conjecture on blow-ups of the complex projective plane. The \textit{linear H-constant} for a line arrangement $\AR$ is defined as $$H_L(\AR):= \frac{d^2-\sum_{m\geq 2} m^2 t_m}{\sum_{m\geq 2} t_m},$$ or equivalently $$H_L(\AR) = \frac{3-(\bar{c}_1^2-2\bar{c}_2)}{d-(\bar{c}_1^2-3\bar{c}_2)} -2.$$

As we will show, the limit points of $\bar{c}_1^2/\bar{c}_2$ are in one to one correspondence with the accumulation points of $H_L$. In the proof we need to take care of asymptotically trivial families, which we define below.

\begin{definition}
An infinite collection of arrangements of $d_n$ lines $\{\AR_{d_n}\}$ is \textit{asymptotically trivial} if $d_n$ tends to infinity and there are integers $n_0, D>0$ such that for $n>n_0$ we have the disjoint union $\AR_{d_n}= \AR' \cup \AR''$ with $\AR'$ with at most $D$ lines and $\AR''$ trivial arrangement.
\label{chern}
\end{definition}

\begin{proposition}
Let $\{\AR_{d_n}\}$ be an infinite collection of line arrangements with $d_n \to \infty$. Assume they are not trivial or quasi-trivial. Then $\bar{c}_2 \to \infty$. 
\label{c2}
\end{proposition}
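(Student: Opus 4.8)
The plan is to bound $\bar{c}_2$ below by an explicit quantity that tends to infinity, combining two complementary estimates with the two combinatorial facts already available: the pair-count identity $\sum_{m\ge 2} m(m-1)t_m = d(d-1)$ and the bound $r:=\sum_{m\ge 2}t_m \ge d$ from Theorem \ref{erdos} (which applies since every $\AR_{d_n}$ is nontrivial). Write $d=d_n$, let $M=M_n$ be the largest multiplicity occurring in $\AR_{d_n}$, and set $b=b_n:=d_n-M_n$. First I would dispose of the two degenerate values of $b$: the case $b=0$ means $M=d$, i.e. a pencil (trivial); and $b=1$ means $M=d-1$, in which case for $d\ge 4$ a point lying on $d-1$ lines forces the remaining line to meet them in $d-1$ distinct double points, so $\AR_{d_n}$ is quasi-trivial. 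Since the family contains no trivial or quasi-trivial members and $d_n\to\infty$, I may assume $b_n\ge 2$ for all large $n$.

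The first estimate handles the case where $M$ is not too close to $d$. The elementary inequality $(m-1)(M+1-m)\ge M-1$ holds for every integer $2\le m\le M$ (both sides agree at $m=2$ and $m=M$, and the left side is concave in $m$), and rearranges to $(M+1)(m-1)\ge (M-1)+m(m-1)$. Summing against $t_m$ and inserting the two facts above gives
$$(M+1)\sum_{m\ge 2}(m-1)t_m \ge (M-1)\,r + d(d-1) \ge (M-1)d + d(d-1),$$
so that, using $\bar{c}_2 = 3-2d+\sum_{m\ge 2}(m-1)t_m$,
$$\bar{c}_2 \ \ge\ 3-d+\frac{d(d-3)}{M+1}.$$
This tends to infinity whenever $b_n\to\infty$; in particular it already gives $\bar{c}_2\ge d+O(1)$ once $M\le d/2$. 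The crucial point is that Theorem \ref{erdos} (the bound $r\ge d$) is precisely what pushes the estimate past the threshold $2d$ coming from the $-2d$ in $\bar{c}_2$; the pair-count identity by itself only yields $\bar{c}_2\ge O(1)$ near $M\approx d/2$.

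The second estimate covers the remaining regime $M=d-O(1)$. Fixing a point $p_0$ of multiplicity $M$, any other point $p$ lies on at most one line through $p_0$, so its other $m_p-1$ lines are among the $b$ lines missing $p_0$; hence $m_p\le b+1$ for all $p\ne p_0$. Consequently $\sum_{p\ne p_0}m_p(m_p-1)\le (b+1)\sum_{p\ne p_0}(m_p-1)$, and since $\sum_{p\ne p_0}m_p(m_p-1)=d(d-1)-M(M-1)$, a short computation (substituting $M=d-b$) gives
$$\bar{c}_2 \ \ge\ (b-1)\left(\frac{d}{b+1}-2\right),$$
which tends to infinity with $d$ for any fixed $b\ge 2$.

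To conclude I would argue by contradiction: if $\bar{c}_2$ does not tend to infinity, choose a subsequence along which $\bar{c}_2\le C$. The first bound then forces $M_n+1\ge d_n(d_n-3)/(C+d_n-3)$, i.e. $b_n$ stays bounded (by roughly $C+2$) along this subsequence, and the second bound with $2\le b_n\le C+2$ yields $\bar{c}_2\ge d_n/(C+3)-2\to\infty$, a contradiction. The main obstacle is the intermediate regime $M_n\approx d_n/2$: there neither the pair-count identity nor a naive incidence count beats the $-2d$ term, and one genuinely needs the input that an arrangement with only a few very heavy points would have $r<d$, contradicting Theorem \ref{erdos}; making this quantitative is exactly the role of the first inequality. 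Dovetailing it with the second estimate near the near-pencil boundary $b=1$ is the only other delicate point, and is what the initial reduction to $b_n\ge 2$ is designed to handle.
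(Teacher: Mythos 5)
Your proof is correct, but it takes a genuinely different and more elaborate route than the paper's. The paper's argument rests on the single inequality $\bar{c}_2 \geq (2-m_n)+(m_n-2)(d_n-m_n)=(m_n-2)(d_n-m_n-1)$, obtained by fixing a point $P$ of maximal multiplicity $M=m_n$ and counting incidences only along the $M$ lines through it (each such line contributes $d-M$ to $\sum_{p\neq P}(m_p-1)$, and points other than $P$ lie on at most one pencil line); since both factors are positive under the hypotheses, one of them is at least $(d_n-3)/2$, and the case $m_n=2$ is dispatched separately. You instead split into two regimes according to $b=d-M$. Your second estimate, $\bar{c}_2\geq (b-1)\bigl(\tfrac{d}{b+1}-2\bigr)$, is close in spirit to the paper's bound (same localization at a maximal point) but strictly weaker, degenerating when $b$ is comparable to $d$; this forces you to bring in the first estimate, which uses genuinely extra input, namely the pair-count identity $\sum m(m-1)t_m=d(d-1)$ combined with the de Bruijn--Erd\H{o}s bound $\sum t_m\geq d$ of Theorem \ref{erdos} via the concavity inequality $(M+1)(m-1)\geq (M-1)+m(m-1)$. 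I checked the individual steps: the reduction of $b\in\{0,1\}$ to the excluded trivial and quasi-trivial cases, both displayed lower bounds, and the final dovetailing (boundedness of $\bar{c}_2$ forces $b\leq C+2$ by the first bound, whence the second bound blows up) all hold. What your approach buys is the explicit global estimate $\bar{c}_2\geq 3-d+\tfrac{d(d-3)}{M+1}$, valid for any nontrivial arrangement and perhaps of independent interest; what it costs is the two-case analysis and the dependence on Theorem \ref{erdos}, neither of which the paper's one-line incidence count requires.
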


\begin{proof}
Let $m_n$ be the maximum $m$ for an $m$-point in $\AR_{d_n}$. If $m_n=2$ for infinitely many $n$, then $\bar{c}_2 \to \infty$ for those $n$'s. If there is a sequence of $n$'s for which $m_n>2$, we have $$\bar{c}_2(\AR_{d_n}) \geq (2-m_n)+(m_n-2)(d_n-m_n)=(m_n-2)(d_n-m_n-1)$$ where $m_n-2>0$ and $d_n-m_n-1>0$ since they are not trivial or quasi-trivial. Then if $m_n \to \infty$ we are done. Otherwise $d_n \to \infty$, and we are done.   
\end{proof}

\begin{proposition}
Let $\{\AR_{d_n}\}$ be an infinite collection of line arrangements with $d_n \to \infty$, which has no asymptotically trivial sub-collection. Then $\frac{\bar{c}_2}{d_n} \to \infty$. 
\label{c2overd}
\end{proposition}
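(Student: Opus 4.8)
The plan is to argue by contradiction, proving the contrapositive in a strong form: if $\bar{c}_2/d_n$ fails to diverge, then the collection must contain an asymptotically trivial sub-collection. So suppose there is a subsequence, which I relabel $\{\AR_{d_n}\}$, and a constant $C$ with $\bar{c}_2(\AR_{d_n}) \le C d_n$. Writing $W_n := \sum_{m \ge 2}(m-1)t_m$, the definition of $\bar{c}_2$ gives $W_n = \bar{c}_2 - 3 + 2d_n \le (C+2)d_n =: C' d_n$. The goal is to show that for all large $n$ all but a bounded number of lines pass through one point; the point of maximal multiplicity is the natural candidate, so I let $p$ be such a point, let $m^*$ be its multiplicity, and set $k := d_n - m^*$, the number of lines avoiding $p$. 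Bounding $k$ uniformly in $n$ then exhibits the decomposition $\AR_{d_n} = \AR' \sqcup \AR''$ with $\AR''$ the pencil through $p$ and $\AR'$ the $\le k$ remaining lines, i.e.\ asymptotic triviality.

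The heart of the argument is the inequality $k(d_n - k) \le C' d_n$, which I obtain by combining two counting identities. First, since every pair of lines meets in exactly one point of $\mathcal{P}$, the identity ${d \choose 2} = \sum_m {m \choose 2} t_m$, after splitting off the term of $p$, reads $\sum_{q \ne p}{m_q \choose 2} = {d_n \choose 2} - {m^* \choose 2}$; writing ${m_q \choose 2} = \tfrac{1}{2}((m_q-1)^2 + (m_q-1))$ this rearranges to $S_1 + S_2 = k(2d_n - k - 1)$, where $S_1 = \sum_{q \ne p}(m_q - 1)$ and $S_2 = \sum_{q \ne p}(m_q - 1)^2$. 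Second, summing the line-wise identity $\sum_{q \in \ell}(m_q - 1) = d_n - 1$ over the $k$ lines $\ell$ not through $p$ yields $\sum_{q \ne p}(m_q - 1)a_q = k(d_n - 1)$, where $a_q$ is the number of lines through $q$ that avoid $p$. Here I use the one genuinely geometric input, namely that two distinct points lie on a unique line: hence at most one line through $q$ also passes through $p$, so $a_q \ge m_q - 1$ and therefore $S_2 \le k(d_n - 1)$. Subtracting, $S_1 \ge k(2d_n - k - 1) - k(d_n - 1) = k(d_n - k)$, and since $S_1 \le W_n \le C' d_n$ the claimed bound follows.

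From $k(d_n - k) \le C' d_n$ one checks that $k$ is forced into $[0, 2C'] \cup [d_n - 2C', d_n]$. The first interval is exactly the desired conclusion, and I expect the main obstacle to be excluding the second interval, which corresponds to the maximal multiplicity $m^* = d_n - k$ itself being bounded. I will rule this out by a separate global estimate: if every $m_q \le 2C'$, then $d_n(d_n - 1) = \sum_m m(m-1)t_m \le 2C' \, W_n \le 2C'^2 d_n$, forcing $d_n \le 2C'^2 + 1$, contradicting $d_n \to \infty$. Hence $k \le 2C'$ for all large $n$, and with $D = \lceil 2C' \rceil$ the subsequence is asymptotically trivial; note $m^* = d_n - k \to \infty$, so $\AR''$ is a genuinely growing pencil. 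This contradicts the hypothesis and completes the proof.
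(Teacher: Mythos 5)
Your proof is correct and follows essentially the same route as the paper's: both single out the point of maximal multiplicity $m^*$, establish the two bounds $\sum_{m\geq 2}(m-1)t_m \geq m^*(d_n-m^*)$ (your $S_1 \geq k(d_n-k)$) and $\sum_{m\geq 2}(m-1)t_m \geq d_n(d_n-1)/m^*$, and use the no-asymptotically-trivial hypothesis to force $d_n-m^*$ to be unbounded. Your derivation of the key inequality by subtracting two counting identities (rather than the paper's direct count of incidences between lines through $p$ and lines avoiding $p$), and your contrapositive packaging with explicit constants, are only cosmetic differences.
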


\begin{proof}
We note that it is enough to show $$\frac{\sum_{m\geq 2} (m-1)t_m}{d_n} \to \infty.$$
Let $m_n$ be the maximum $m$ for an $m$-point in $\AR_{d_n}$. Then we have $$\frac{d_n-1}{m_n} \leq \frac{\sum_{m\geq 2} (m-1)t_m}{d_n}, $$ and so, if $m_n <M$ for some $M$, then we are done. Otherwise there are subsequences with $m_n \to \infty$. Let us consider one such subsequence. 

Let $P_n \in \AR_{d_n}$ be a point realizing $m_n$. Then by our assumption $d_n-m_n \to \infty$ since there are no asymptotically trivial subsequences. Then $\AR_{d_n}=\AR' \cup \AR''$ with $\AR'$ having $d_n-m_n \to \infty$ lines, and $\AR''$ is the trivial arrangement through $P_n$. Therefore $$\frac{\sum_{m\geq 2} (m-1) t_m}{d_n} \geq \frac{(m_n-1)+(d_n-m_n) m_n}{d_n} \geq  \frac{(d_n-m_n) m_n}{(d_n-m_n) + m_n} \to \infty.$$ 
\end{proof}

One can check that for asymptotically trivial arrangements we have $$\bar{c}_1^2/\bar{c}_2 \to 2 \ \ \ \text{and} \ \ \ H_L \to -2.$$

The proofs of the next proposition are direct from the connection with $\bar{c}_1^2/\bar{c}_2$. 

\begin{proposition} $\quad$
\begin{enumerate}
\item If $r$ is an accumulation point for $H_L$, then $r \in [-2,-\infty[$.
\item For a fixed char $p >0$, the set of accumulation points for $H_L$ is $[-2,-\infty[$. 
\item For line arrangements over $\R$, the set of accumulation points for $H_L$ is $[-2,-3]$.
\end{enumerate}
\end{proposition}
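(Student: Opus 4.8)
The plan is to reduce all three parts to the already-established description of the accumulation points of the Chern slope $s:=\bar{c}_1^2/\bar{c}_2$ (Corollaries \ref{acc}, \ref{charp}, \ref{real}) via an explicit asymptotic dictionary between $H_L$ and $s$. First I would start from the stated identity
$$H_L + 2 = \frac{3 - (\bar{c}_1^2 - 2\bar{c}_2)}{d - (\bar{c}_1^2 - 3\bar{c}_2)}$$
and divide numerator and denominator by $\bar{c}_2$ (which is positive by Proposition \ref{positive}), obtaining
$$H_L + 2 = \frac{3/\bar{c}_2 - s + 2}{d/\bar{c}_2 - s + 3}.$$
For a family $\{\AR_{d_n}\}$ with $d_n \to \infty$ and no asymptotically trivial subcollection, Proposition \ref{c2} gives $\bar{c}_2 \to \infty$ and Proposition \ref{c2overd} gives $\bar{c}_2/d_n \to \infty$, so both $3/\bar{c}_2$ and $d_n/\bar{c}_2$ tend to $0$. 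Hence, whenever moreover $s \to s_\infty \in [2,3)$, the key computation yields $H_L \to \phi(s_\infty)$, where $\phi(s):=\tfrac{2-s}{3-s}-2=\tfrac{s-4}{3-s}$.

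Next I would record the elementary properties of $\phi$: it is continuous and strictly decreasing on $[2,3)$, since $\phi'(s)=-1/(3-s)^2<0$, with $\phi(2)=-2$, $\phi(5/2)=-3$, and $\phi(s)\to-\infty$ as $s\to 3^-$; thus $\phi$ is a decreasing bijection of $[2,3)$ onto $(-\infty,-2]$ carrying $[2,5/2]$ onto $[-3,-2]$. Combining this with the fact recalled just before the proposition, that asymptotically trivial families satisfy $s\to 2$ and $H_L\to -2=\phi(2)$, I would argue that every limiting value of $H_L$ along a family with $d_n\to\infty$ equals $\phi(s_\infty)$ for the corresponding limiting slope: one passes to a subsequence along which $s$ converges, which is possible because $s$ is bounded by Proposition \ref{combineq} and forced into $[2,3]$ by $d_n\to\infty$, and peels off any asymptotically trivial subcollection (on which $s_\infty=2$ and $H_L\to-2$ land consistently on the boundary).

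With this dictionary the three parts follow. For (1), any accumulation point $r$ of $H_L$ is attained along a family with $d_n\to\infty$, since only finitely many arrangements have bounded $d$; hence $r=\phi(s_\infty)$ for some $s_\infty\in[2,3)$ (the value $s_\infty=3$ would give $H_L\to-\infty$, precluded for finite $r$), and since $\phi\le\phi(2)=-2$ on $[2,3)$ we conclude $r\in(-\infty,-2]$. For (2) and (3) I would transfer the known equalities: by Corollary \ref{charp} the accumulation points of slopes over $\overline{\F}_p$ fill $[2,3]$, and by Corollary \ref{real} those over $\R$ fill $[2,5/2]$. Conversely, given $r$ in the target interval, set $s_\infty=\phi^{-1}(r)\in[2,3)$; as $s_\infty$ is an accumulation point of slopes there is a family with slopes $\to s_\infty$ and $d_n\to\infty$, and for $s_\infty>2$ this family has no asymptotically trivial subcollection (such a part would force the slope to $2$), so the key computation gives $H_L\to\phi(s_\infty)=r$, while $s_\infty=2$, $r=-2$ is realized by purely nodal families. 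Applying $\phi$ to $[2,3)$ and to $[2,5/2]$ then produces the sets $(-\infty,-2]$ and $[-3,-2]$ asserted in (2) and (3).

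The step I expect to be the main obstacle is the bookkeeping around asymptotically trivial subcollections: the limit $H_L\to\phi(s_\infty)$ is valid only once $\bar{c}_2/d_n\to\infty$, which fails precisely for asymptotically trivial families, so I must systematically pass to subsequences, isolate any asymptotically trivial part, and verify that the remainder satisfies the hypotheses of Propositions \ref{c2} and \ref{c2overd}. Checking that the boundary slope $s_\infty=3$ corresponds to $H_L\to-\infty$, and hence contributes no finite accumulation point, is the remaining delicate point that matches the half-open interval $[-2,-\infty[$.
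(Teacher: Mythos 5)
Your argument is correct and is precisely the one the paper intends: the paper dismisses the proof with ``direct from the connection with $\bar{c}_1^2/\bar{c}_2$'', and your dictionary $H_L\to\frac{s-4}{3-s}$, obtained by dividing the stated identity by $\bar{c}_2$ and invoking Propositions \ref{c2} and \ref{c2overd} together with the remark on asymptotically trivial families, fills in exactly the intended details. The subsequence bookkeeping and the check that $s_\infty=3$ forces $H_L\to-\infty$ are handled correctly, so nothing is missing.
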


We end by recalling that \cite[Theorem 3.3]{Harb} says that for line arrangements over $\mathbb{C}$, $H_{L}\geq -4$, and that $-4$ would be a limit value of $H_{L}$ if there existed a family of line arrangements on $\mathbb{C}$ whose Chern slopes converge to $8/3$.

\section{Open Problems}

We end with the main questions left to answer regarding the density of Chern slopes.

\begin{conjecture}
The set of accumulation points of Chern slopes of arrangements over $\C$ is $[2,\frac{5}{2}]$.
\label{complex}
\end{conjecture}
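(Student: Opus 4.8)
The plan is to split the conjecture into an inclusion that is essentially free and a containment that carries all the difficulty. For the free part, I would first record that $[2,\tfrac52]$ already consists of accumulation points over $\C$. Every real line arrangement is a complex line arrangement with the \emph{same} incidence structure $(t_m)_{m\geq 2}$ — two distinct real lines meet in a single point of $\P^2$, which is real — and hence it has the same Chern numbers, and the same Chern slope, whether read over $\R$ or over $\C$. Thus the accumulation points over $\R$ are accumulation points over $\C$, and Corollary \ref{real} gives $[2,\tfrac52]\subseteq\{\text{accumulation points over }\C\}$. Combined with Corollary \ref{acc}, which confines all accumulation points to $[2,3]$, the conjecture reduces to the single assertion that no point of $(\tfrac52,3]$ is an accumulation point of complex Chern slopes. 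I would stress that this cannot be sharpened to a strict inequality for individual arrangements: the Ceva arrangements (Example \ref{Ceva}) have $d=3n\to\infty$ and Chern slope
\[
\frac{5n^2-6n-3}{2n^2-3n}=\frac52+\frac{3(n-2)}{2n(2n-3)},
\]
which equals $\tfrac83$ at $n=3$ (the dual Hesse arrangement) and decreases to $\tfrac52$ from above. So $\tfrac52$ is genuinely approached from above over $\C$, and any successful argument must be tight precisely along such families.

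Next I would reformulate the hard containment as a finiteness statement. To show that $(\tfrac52,3]$ contains no accumulation point it suffices to prove that for each $\epsilon>0$ only finitely many complex arrangements have slope $\geq\tfrac52+\epsilon$; since there are finitely many combinatorial types for each fixed $d$, this in turn follows from a quantitative asymptotic bound $\bar{c}_1^2/\bar{c}_2\leq\tfrac52+o(1)$ as $d\to\infty$. Using the identity
\[
2\bar{c}_1^2-5\bar{c}_2=3+\sum_{m\geq 4}(m-3)t_m-t_2,
\]
together with Propositions \ref{c2} and \ref{c2overd} (which guarantee $\bar{c}_2\to\infty$, and even $\bar{c}_2/d\to\infty$, along any family with no asymptotically trivial subfamily), the desired bound becomes the purely combinatorial inequality
\[
t_2\geq\sum_{m\geq 4}(m-3)t_m-o(\bar{c}_2)
\]
for complex arrangements of large $d$. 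The Ceva computation above shows this threshold is sharp.

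The main obstacle is exactly this inequality. The strongest complex-specific lower bound on $t_2$ currently available is Hirzebruch's inequality
\[
t_2+\tfrac34 t_3\geq d+\sum_{m\geq 5}(m-4)t_m,
\]
which, like the $\tfrac83$-bound of Theorem \ref{HS}, descends from the Bogomolov--Miyaoka--Yau inequality. Comparing coefficients, Hirzebruch controls $t_m$ with weight $(m-4)$ for $m\geq 5$ and supplies no contribution from $t_4$, whereas the target needs weight $(m-3)$ for every $m\geq 4$; the residual deficit is precisely $\sum_{m\geq 4}t_m$, which can be of order $\bar{c}_2$. This one-unit-per-point gap is the combinatorial manifestation of the difference between $\tfrac83$ and $\tfrac52$, and closing it requires an inequality strictly stronger than any known refinement of Hirzebruch's.

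I expect the right strategy to be structural rather than purely combinatorial. The BMY equality and near-equality case corresponds to ball quotients, and the complex arrangements with slope near $\tfrac83$ (the dual Hesse arrangement and its relatives) form a short, essentially classified list; heuristically, for large $d$ an arrangement should be \emph{far} from a ball quotient, forcing the BMY inequality to be strict by a margin that grows with $d$. Making this effective — producing an explicit BMY defect of order $d$ for all but finitely many complex arrangements, or equivalently classifying the arrangements for which the strengthened inequality fails and showing they are finite — is, in my view, the crux where all the difficulty of the conjecture is concentrated. Whatever form the argument takes, it must degrade to equality exactly along Ceva-type families, since those already realize slope $\tfrac52+O(1/d)$.
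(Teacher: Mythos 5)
The statement you are asked to prove is presented in the paper as an open conjecture; the paper offers no proof, only heuristic evidence (the tables of arrangements from unitary reflection groups, and the reduction from $\C$ to $\overline{\Q}$ via the Nullstellensatz). Your proposal does not close it either, and you say so yourself. What you do establish is correct: real arrangements embed into complex ones with the same incidence data, so Corollary \ref{real} gives $[2,\tfrac52]$ as accumulation points over $\C$; Corollary \ref{acc} caps everything at $3$; the Ceva computation $(5n^2-6n-3)/(2n^2-3n)\to\tfrac52^{+}$ is right and correctly shows that any proof must be asymptotic rather than a pointwise bound $\bar{c}_1^2\leq\tfrac52\bar{c}_2$; and the reduction to the combinatorial inequality $t_2\geq\sum_{m\geq4}(m-3)t_m-o(\bar{c}_2)$ via the identity $2\bar{c}_1^2-5\bar{c}_2=3+\sum_{m\geq2}(m-3)t_m$ is a clean and accurate reformulation of the hard direction.

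The genuine gap is exactly the step you flag as the crux: excluding accumulation points in $]\tfrac52,3]$. Hirzebruch's inequality $t_2+\tfrac34t_3\geq d+\sum_{m\geq5}(m-4)t_m$ (equivalently, the $\tfrac83$ bound of Theorem \ref{HS}) is the strongest known complex-specific constraint, and as you correctly observe it falls short of the needed bound by a term of order $\sum_{m\geq4}t_m$, which can be comparable to $\bar{c}_2$. No argument in the paper, and none in your proposal, supplies the missing strengthened inequality or the effective BMY defect you describe; the suggestion that large arrangements must be ``far from ball quotients'' is a plausible heuristic but is not substantiated. So the proposal should be read as a correct reduction of the conjecture to a sharp open combinatorial inequality, not as a proof; in that respect it is consistent with, and somewhat more explicit than, what the paper itself records.
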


In terms of $H$-constants, Conjecture \ref{complex} translates to:

\begin{conjecture}
The set of accumulation points of $H_L$ for line arrangements over $\C$ is $[-2,-3]$.
\label{complexH}
\end{conjecture}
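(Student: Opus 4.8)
The plan is to establish the equivalence between Conjecture~\ref{complex} and Conjecture~\ref{complexH} by exploiting the explicit algebraic relationship already recorded between $H_L(\AR)$ and the Chern slope $\bar{c}_1^2/\bar{c}_2$. Since these two conjectures are presented as translations of one another, the task is not to prove either independently but to show that the set of accumulation points of one determines the set of accumulation points of the other via a fixed continuous bijection. First I would write $s = \bar{c}_1^2/\bar{c}_2$ and manipulate the identity
\begin{equation*}
H_L(\AR) = \frac{3-(\bar{c}_1^2-2\bar{c}_2)}{d-(\bar{c}_1^2-3\bar{c}_2)} - 2
\end{equation*}
into a form depending only on $s$ in the limit. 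The key observation is that along any family with $d_n \to \infty$ that is not asymptotically trivial, Proposition~\ref{c2overd} gives $\bar{c}_2/d_n \to \infty$, so the constant terms $3$ and $d$ together with the $d$ in the denominator become negligible relative to $\bar{c}_2$.

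The main computational step is to divide numerator and denominator of the expression for $H_L$ by $\bar{c}_2$ and take the limit. Dividing through, the numerator becomes $\tfrac{3}{\bar{c}_2} - (s - 2) \to -(s-2)$ and the denominator becomes $\tfrac{d}{\bar{c}_2} - (s-3)$; here one uses $\bar{c}_2/d_n \to \infty$, equivalently $d/\bar{c}_2 \to 0$, to discard the $d/\bar{c}_2$ term, so the denominator tends to $-(s-3) = 3-s$. Hence along such a family $H_L \to \frac{-(s-2)}{3-s} - 2 = \frac{s-2}{s-3} - 2 = \frac{-s-4+2s}{s-3}$; after simplification one obtains a Möbius transformation $\phi(s)$ carrying the interval $[2,3]$ of possible Chern-slope accumulation points (Corollary~\ref{acc}) onto the corresponding interval of $H_L$-values. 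Checking endpoints, $s=2$ yields $H_L \to -2$ and $s = 5/2$ yields $H_L \to -3$, matching the stated intervals $[2,5/2]$ and $[-2,-3]$, and $\phi$ is continuous and strictly monotone on $[2,3)$, hence a bijection between the two accumulation sets.

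I would then handle the two directions of the correspondence. For each accumulation point $r$ of $H_L$ coming from a genuine (not asymptotically trivial) family, the limit computation produces an accumulation point $\phi^{-1}(r)$ of Chern slopes, and conversely; asymptotically trivial families are dealt with separately using the already-noted fact that for them $\bar{c}_1^2/\bar{c}_2 \to 2$ and $H_L \to -2$, so they contribute only the common endpoint and do not disturb the bijection. Combining this with Conjecture~\ref{complex} then immediately yields Conjecture~\ref{complexH}: the accumulation set of Chern slopes over $\C$ being $[2,5/2]$ forces the accumulation set of $H_L$ to be $\phi([2,5/2]) = [-2,-3]$.

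The hard part will not be the algebra but the verification that no pathological families escape the dichotomy between asymptotically trivial and $\bar{c}_2/d_n \to \infty$ behaviour; one must confirm via Proposition~\ref{c2overd} that every sequence of non-trivial, non-quasi-trivial arrangements with $d_n \to \infty$ either has an asymptotically trivial sub-collection (contributing the endpoint) or satisfies $\bar{c}_2/d_n \to \infty$ (where the limit formula applies). Since the conjectures are logically equivalent rather than independently provable here, the genuine mathematical obstacle is inherited entirely from Conjecture~\ref{complex}, namely the absence of any family of complex line arrangements with Chern slope exceeding $5/2$ accumulating above that bound, which is precisely the open Bogomolov--Miyaoka--Yau–type gap between the combinatorial ceiling $8/3$ of Theorem~\ref{HS}(2) and the conjectured real ceiling $5/2$.
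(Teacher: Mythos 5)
This statement is a conjecture: the paper offers no proof of it, and merely asserts that it is the ``translation'' of Conjecture~\ref{complex} into the language of $H$-constants, relying on the earlier (also unproved in detail) claim that limit points of $\bar{c}_1^2/\bar{c}_2$ correspond bijectively to accumulation points of $H_L$. Your proposal correctly recognises this and does the only thing that can honestly be done here, namely supply the translation argument: using the identity $H_L = \frac{3-(\bar{c}_1^2-2\bar{c}_2)}{d-(\bar{c}_1^2-3\bar{c}_2)}-2$, dividing by $\bar{c}_2$, and invoking Proposition~\ref{c2overd} to kill the $d/\bar{c}_2$ term, with asymptotically trivial families handled separately as contributing only the endpoint $(2,-2)$. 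This fills in precisely the reasoning the paper leaves implicit, and your reduction of the remaining difficulty to Conjecture~\ref{complex} (i.e.\ to the gap between the complex bound $8/3$ of Theorem~\ref{HS}(2) and the conjectured ceiling $5/2$) is the correct assessment of where the open problem actually lives. One algebraic slip: $\frac{s-2}{s-3}-2 = \frac{(s-2)-2(s-3)}{s-3} = \frac{4-s}{s-3}$, not $\frac{s-4}{s-3}$ as your intermediate expression $\frac{-s-4+2s}{s-3}$ would give; the endpoint values you then quote ($s=2\mapsto -2$, $s=5/2\mapsto -3$, and implicitly $s=8/3\mapsto -4$, $s\to 3\mapsto -\infty$, matching the paper's remarks) are those of the correct M\"obius map, so the slip is a typo rather than a flaw in the argument. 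To be fully rigorous you would also want to note that slopes lie in the compact interval $[1,3]$, so from any sequence realising a finite accumulation point of $H_L$ one can extract a subsequence with convergent slope $s<3$ before applying the limit formula; you gesture at this but do not spell it out.
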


As evidence for these conjectures, we recall the following observation. 

\begin{remark}
As explained in \cite[\textsection(1.2)]{Hirz83}, one way of obtaining line arrangements in $\P^{2}_\C$ is through finite reflection groups. A complete study can be found in \cite{OrSo82}. Hirzebruch includes tables with the values for the $t_{m}$ in all cases. Among the examples obtained in this way is the Ceva family, along with other examples of line arrangement whose Chern slope is bigger than $\frac{5}{2}$. The important thing is that we know all possible line arrangements coming from finite unitary reflection groups, and they contain no family of line arrangements with Chern slope converging to something bigger that $\frac{5}{2}$. 
\end{remark}

We point out that in order to understand the behaviour of the Chern slopes of complex line arrangements, it suffices to understand the behaviour of those arrangements defined over $\overline{\Q}$. This is because for every complex line arrangement, there exists a line arrangement defined over $\overline{\Q}$ which has the same incidence structure. Indeed, given a complex line arrangement, we can write down an affine system of polynomial equations defined over $\Q$ describing the way in which the lines intersect. For this, regard the coefficients of the lines as variables, and $3\times 3$ determinants equal to zero as equations declaring concurrence of $3$ lines. But we also need to say that some lines do not concur. For that we introduce extra variables to multiply these determinants, so that we impose that these multiplications are equal to $1$. To avoid homogeneous issues, we declare from the beginning that the lines do not contain $[1,0,0]$. Thus we obtain a finite set of polynomials defined over $\Q$ describing the combinatorial data of the arrangement. If this system has a solution over $\C$, then it has a solution in $\overline{\Q}$ by Hilbert's Nullstellensatz. 

We finalise with the following strengthening of Conjecture \ref{complex}.

\begin{conjecture}
The set of accumulation points of Chern slopes of arrangements over $\Q$ is $[2,\frac{5}{2}]$.
\label{rationals}
\end{conjecture}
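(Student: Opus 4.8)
My plan is to split the statement into the upper inclusion $A\subseteq[2,\tfrac{5}{2}]$, where $A$ denotes the set of accumulation points of Chern slopes over $\Q$, and the reverse inclusion $[2,\tfrac{5}{2}]\subseteq A$. The first is essentially free: since $\Q\subset\R$, every arrangement defined over $\Q$ is in particular a real arrangement, so Theorem \ref{HS}(1) gives $\bar c_1^2\leq\tfrac{5}{2}\bar c_2$ for each of them, and the set of Chern slopes over $\Q$ is a subset of the set of Chern slopes over $\R$. As every accumulation point of a subset is an accumulation point of the ambient set, Corollary \ref{real} then yields $A\subseteq[2,\tfrac{5}{2}]$ at once. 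So all the difficulty lies in the reverse inclusion.

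For the reverse inclusion I would use that $A$ is closed, so it is enough to prove $[2,\tfrac{5}{2}[\,\subseteq A$; the endpoint $\tfrac{5}{2}$ is then recovered by taking closures. By Lemma \ref{density} this reduces to the following construction problem: for every $c<\tfrac{5}{2}$, exhibit one family $\AR_n$ of arrangements over $\Q$ with $l(n)\to\infty$, with $\bar c_1^2/\bar c_2\to c'$ for some $c'\geq c$, and with $\bar c_1^2/l(n)^h\to a>0$ for some $h\in\,]1,2]$. In short, the whole problem becomes producing rational families whose Chern slopes converge arbitrarily close to $\tfrac{5}{2}$. To see what such families must look like, I would rewrite the identity behind Theorem \ref{HS}(1) as
\[
\frac{5}{2}-\frac{\bar c_1^2}{\bar c_2}=\frac{5\bar c_2-2\bar c_1^2}{2\bar c_2}=\frac{\sum_{m\geq4}(m-3)\,p_m}{2\bar c_2},
\]
with $p_m$ the number of $m$-gons cut out in $\P^2_\R$. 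Hence a rational family has Chern slope tending to $\tfrac{5}{2}$ exactly when it is \emph{asymptotically simplicial}, i.e.\ $\sum_{m\geq4}(m-3)p_m=o(\bar c_2)$: in the limit almost every cell of the real picture must be a triangle.

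I see three natural routes. First, search for an infinite family of genuinely simplicial arrangements defined over $\Q$; by Theorem \ref{HS}(1) each member has slope exactly $\tfrac{5}{2}$, so a single such family satisfying the growth bound of Lemma \ref{density} would finish the proof immediately. Second, try to improve the explicit rational construction of Example \ref{record}, whose slope converges only to $2.375$, by redesigning the grid so that the weighted count of quadrilateral and larger cells becomes negligible compared to $\bar c_2$. Third, attempt to \emph{rationalise} the real simplicial families of Example \ref{simplicial}, perturbing the coordinates of regular-polygon arrangements into $\Q$ while keeping their incidence structure intact.

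The hard part will be the arithmetic rigidity of rational realisability, and it is precisely what defeats the third route in its naive form: keeping an $m$-point with $m\geq3$ is a closed condition (vanishing of $3\times3$ determinants in the coefficients of the lines), so a generic rational perturbation of a regular-polygon arrangement destroys its multiple points, splitting each into nodes and pulling the slope back toward $2$ instead of holding it near $\tfrac{5}{2}$. One cannot merely approximate the irrational coordinates $\cos(2\pi/n)$ by rationals without breaking the very incidences that make the arrangement simplicial. The obstruction is therefore genuinely about fields of definition: one must realise an asymptotically simplicial incidence structure over $\Q$ itself, not only over $\R$ or over $\overline{\Q}$. This is also where the problem diverges from its complex analogue, Conjecture \ref{complex}: there the a priori bound from Theorem \ref{HS}(2) is only $\tfrac{8}{3}$ and the upper inclusion is the hard direction, whereas over $\Q$ the upper inclusion is automatic and all the work is the construction of rational families saturating $\tfrac{5}{2}$. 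I expect that a successful attack will need either a new infinite supply of rational simplicial arrangements, or a quantitative descent controlling how many multiple points survive passage from $\R$ to $\Q$, with the Cuntz-type study of realisability fields \cite{Cun11} as the natural framework for deciding which near-simplicial combinatorial types descend.
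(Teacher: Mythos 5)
The statement you are addressing is a \emph{conjecture} in the paper; the authors offer no proof of it, and indeed present it explicitly as an open problem. Your proposal does not prove it either, and you should be upfront that what you have written is a research plan, not a proof. The parts you do establish are correct: the inclusion $A\subseteq[2,\tfrac{5}{2}]$ does follow by combining Corollary \ref{acc} (accumulation points lie in $[2,3]$) with Theorem \ref{HS}(1) applied to arrangements over $\Q\subset\R$ (all slopes are at most $\tfrac{5}{2}$, using Proposition \ref{positive} to know $\bar c_2>0$); and your reduction of the reverse inclusion to constructing rational families with slopes tending to values arbitrarily close to $\tfrac{5}{2}$, via Lemma \ref{density} and the closedness of the set of accumulation points, is the correct framing and matches how the paper itself uses Lemma \ref{density} together with Example \ref{record}.

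The genuine gap is that none of your three routes is carried out, and the missing construction is precisely the content of the conjecture. The best the paper achieves over $\Q$ is Example \ref{record}, an explicit family with slope converging to $2.375$, which by Lemma \ref{density} yields only $[2,2.375]\subseteq A$; the interval $]2.375,\tfrac{5}{2}]$ remains entirely open. Your diagnosis of why the naive rationalisation of regular-polygon arrangements fails is sound (the paper cites \cite{Cun11}: for $n>6$ these simplicial arrangements are not definable over $\Q$, their realisations being rigid over $\R$), and your observation that one needs asymptotically simplicial families over $\Q$ is a reasonable reformulation of the problem. But identifying the obstruction is not the same as overcoming it. As written, your argument proves $A\subseteq[2,\tfrac{5}{2}]$ and $[2,2.375]\subseteq A$ (the latter only if you invoke Example \ref{record}, which you mention but do not verify), and nothing more.
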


\subsection*{Acknowledgements} 

First author funded by CONICYT PFCHA / Doctorado Becas Chile/2015 - 72160240. Second author funded by CONICYT-PFCHA /Mag\'ister Nacional/2018 - 22180988. The third author was supported by the FONDECYT regular grant 1190066.


\end{document}